\newtheorem{theorem}{Theorem}[section]
\newtheorem{lemma}[theorem]{Lemma}
\newtheorem{claim}[theorem]{Claim}
\newtheorem{conjecture}[theorem]{Conjecture}
\newtheorem{problem}[theorem]{Problem}
\theoremstyle{definition}
\newcommand{\eps}{\varepsilon}
\title{Global rigidity of random graphs in $\mathbb{R}$}
\author{
Richard Montgomery\thanks{Institute of Mathematics, University of Warwick, UK. Email: \textbf{richard.montgomery@warwick.ac.uk}. Supported
by the European Research Council (ERC) under the European Union Horizon 2020 research and innovation programme (grant
agreement No. 947978)
} \and
Rajko Nenadov\thanks{School of Computer Science, University of Auckland, New Zealand. Email: \textbf{rajko.nenadov@auckland.ac.nz}. Supported by the New Zealand Marsden Fund.} \and
Julien Portier\thanks{Department of Pure Mathematics and Mathematical Statistics (DPMMS), University of Cambridge, UK. Email: \textbf{jp899@cam.ac.uk}. Supported by the EPSRC (Engineering and Physical Sciences Research Council) and by the Cambridge Commonwealth, European and International Trust.} \and 
Tibor Szabó\thanks{Institute of Mathematics, Freie Universit\"at Berlin, Germany. Email: \textbf{szabo@math.fu-berlin.de}. Research partially funded by the DFG (German Research Foundation) under Germany’s Excellence Strategy – The Berlin Mathematics Research Center MATH+ (EXC-2046/1, project ID: 390685689)}
}
\date{}
\begin{document}

\maketitle

\begin{abstract}
We investigate the problem of reconstructing a set $P\subseteq \mathbb{R}$ of distinct points, where the only information available about $P$ consists of the distances between some of the pairs of points. 
More precisely, we examine which properties of the graph $G$ of known distances, defined on the vertex set $P$, 
ensure that $P$ can be uniquely reconstructed up to isometry.
We prove that as soon as the random graph process has minimum degree 2, with high probability it can reconstruct all distances within any point set in $\mathbb{R}$. This resolves a conjecture of Benjamini and Tzalik.
We also study the feasibility and limitations of reconstructing the distances within almost all points using much sparser random graphs.
In doing so, we resolve a question posed by Girão, Illingworth, Michel, Powierski, and Scott.
\end{abstract}

\section{Introduction}

We consider a set $P\subseteq \mathbb{R}^d$ of (distinct) points and we are interested in the distances between any two of them, but we only have access to the distances between some of them. Which properties of the graph $G$ of known distances on the vertex set $P$ are sufficient for the ``reconstruction'' of $P$, up to isometry? 
One can imagine this problem arising naturally in various scenarios when we are interested in monitoring the relative positions of a set of agents (say a flock of birds in the sky or a fleet of ships in the ocean) moving in space~\cite{Lovasz-book}. The agents are equipped with sensors that allow certain pairs to measure the distance between them, though measuring all pairwise distances is not feasible.
{This leads to several key questions:}
When can we recover the geometric positions of all agents relative to each other? How should we choose $G$ so that we are able to recover the relative positions no matter how the agents are placed? 
What if we are  content already with the relative positions of {\em most} of the agents?

The complete graph on any pointset $P\subseteq \mathbb{R}^d$ of course fully reconstructs $P$, but the aim here would rather be to reconstruct from less than complete information.
To be able to do that when the dimension $d$ is at least $2$, one needs to impose some restriction on $P$, as otherwise there are examples which show that if $G$ is missing just one (carefully chosen) edge, a full reconstruction of $P$ is not possible. For example, consider the configuration with $n-2$ points on a line and two points outside of the line. Then we cannot decide whether these two points lie on the same side of the line or not, unless we are given the distance between them. It turns out that if one restricts to so-called {\em generic} point sets $P$, the coordinates of which are algebraically independent over the rationals, then whether or not $P$ is reconstructible from $G$ depends only on the combinatorial properties of $G$ and not on $P$.  This case has been extensively studied and has a rich mathematical theory (e.g.\ see \cite{barre18rigidrandom,jordan22randomrigid,kasivis11rigid,lew23randomrigid,lovasz82rigid,villanyi2023every}; for a thorough introduction to the topic, see \cite{handbook_discrete_geometry}). 

When $d=1$ however, unlike in the case of higher dimensions, there are no clear obstacles which justify restricting to generic, as opposed to arbitrary, point sets. This property is what we focus on in the present paper.
It  is a folklore result (e.g.\ see \cite[Chapter 63]{handbook_discrete_geometry}) that  the known-distance graph $G$ reconstructs 
any generic point set $P\subseteq \mathbb{R}$  if and only if it is $2$-connected. 
There is a rich family of two-connected graphs however which do not reconstruct certain non-generic point sets in $\mathbb{R}$ (the four-cycle to begin with). 
In fact, for any given $d\in \mathbb{N}$ there are many graphs which do not reconstruct some non-generic point set in $\mathbb{R}$, yet they can reconstruct any generic point set not only in $\mathbb{R}$, but also in $\mathbb{R}^d$. 
Such graphs of arbitrary large connectivity $d(d+1)$ were constructed by Girao et al.~\cite{girao2023reconstructing} and the fact that they reconstruct generic point sets in $\mathbb{R}^d$ follows from the recent breakthrough of Vill\'anyi~\cite{villanyi2023every}. 
Garamv\"olgyi~\cite{garamvolgyi2022global} in fact proved that the family of our interest is pretty complex: it is co-NP-complete to decide whether a given graph reconstructs any set of distinct points in $\mathbb{R}$.

This difficulty makes the typical behaviour in this context particularly interesting. In this direction Benjamini and Tzalik \cite{benjamini2022determining} initiated the study 
of the Erd\H os-R\'enyi random graph $G(n,p)$.  For any point set
$P\subseteq\mathbb{R}$ with $|P|=n$, they showed that for some constant $C$ it holds that if the graph $G$ of known distances is distributed as $G(n, C \log n / n)$, then with high probability (w.h.p.) $P$ is reconstructible from~$G$. 
This result is of course best possible up to the constant factor $C$ as it is well-known that for $p < (\log n  +\log\log n) /n$ the random graph $G(n,p)$ w.h.p.\ is not $2$-connected, hence does not even reconstruct generic point sets.   
Another important aspect of the above result is that we are {\em first} given $P$ in $\mathbb{R}$ and {\em only after that} we randomly generate the graph of known distances, which, with high probability, is proven to reconstruct all distances within that specific set $P$.

In both of these directions Benjamini and Tzalik \cite{benjamini2022determining} conjectured a significant strengthening of their result: namely, that (i) a hitting time result should also hold, i.e.\ that the reconstruction property for a particular $P$ should likely be true from the very moment the random graph process becomes $2$-connected, and (ii) at that point, not only does the random graph reconstructs the given point set, but it reconstructs any such set.
Recently Gir\~{a}o, Illingworth, Michel, Powierski, and Scott~\cite{girao2023reconstructing} proved (i). In our first main result, stated below, we prove (ii) using a different and simpler approach. We also study the possibilities/limitations of the reconstruction of a linear sized portion of the point set.

\subsection{New results.}
\paragraph{Terminology.} Consider a random graph process $\{G_m\}_{m \ge 0}$ on the vertex set $[n]$, for some $n \in \mathbb{N}$, where $G_0$ is an empty graph and each $G_i$ is formed from $G_{i-1}$ by adding a new edge uniformly at random. Let $\tau := \tau_2$ denote the smallest $m$ such that $\delta(G_m) \ge 2$. \\
A {\em (bar-and-joint) framework (in $\mathbb{R}^d$)} is a pair $(G,f)$ where $G$ is a simple graph and $f:V(G) \rightarrow \mathbb{R}^d$ maps the vertices into the $d$-dimensional euclidean space. 
For a graph  $G=(V,E)$ and an injective function $f:V  \rightarrow \mathbb{R}^d$, we call the framework  $(G,f)$ {\em globally rigid} if for every injective function $g:V\rightarrow \mathbb{R}^d$ 
with the property that $||f(x)-f(y)|| = ||g(x)-g(y)||$ for every edge $xy\in E$, we also have $||f(x)-f(y)|| = ||g(x)-g(y)||$ for every pair $x,y \in V$. To express this in our informal description above, we said that ``$G$ reconstructs the point set $f(V)$ (up to isometry)". 
We call a graph $G$ {\em globally rigid (in $\mathbb{R}^d$)} if for every injective function $f:V\rightarrow \mathbb{R}^d$ the framework $(G,f)$ is globally rigid.

\smallskip

\noindent{\bf Remark.} We note that the term ``globally rigid'' in the literature is often used in the sense that {\em generic} embeddings are reconstructible. In contrast, as introduced above,  ``globally rigid'' in our paper will always mean that {\em injective} embeddings are reconstructible. Garamv\"olgyi~\cite{garamvolgyi2022global} refers to this as ``injectively globally rigid''. As this is the only type of rigidity we deal with, we omit the adjective. 

\smallskip


Using this terminology, the theorem of Girão, Illingworth, Michel, Powierski, and Scott~\cite{girao2023reconstructing}, mentioned earlier, can be restated as follows.

\begin{theorem}[\cite{girao2023reconstructing}] \label{thm:oxford} 
    For every injective function  $f \colon [n] \rightarrow \mathbb{R}$ the framework $(G_{\tau}, f)$ is globally rigid w.h.p. 
\end{theorem}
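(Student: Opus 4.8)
The plan is to reduce global rigidity of $(G_\tau,f)$ to the non-existence of a combinatorial ``flip'' of $G_\tau$ that is compatible with the fixed values $f$, and then to show that the random graph avoids all such flips with high probability. Suppose $g\colon[n]\to\mathbb{R}$ is injective with $|g(u)-g(v)|=|f(u)-f(v)|$ for every edge $uv$. Orienting the edges arbitrarily, record for each edge a sign $\sigma_{uv}\in\{+1,-1\}$ according to whether $g(u)-g(v)=\sigma_{uv}(f(u)-f(v))$. The two ``trivial'' sign patterns, all $+1$ and all $-1$, correspond exactly to $g=f+c$ and $g=-f+c$, i.e.\ to global isometries; any other (``mixed'') pattern produces a $g$ that is not an isometry, so $(G_\tau,f)$ fails to be globally rigid precisely when some mixed sign pattern is both consistent (the signed $f$-differences sum to $0$ around every cycle) and yields an injective $g$. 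A short computation shows that consistency is equivalent to the existence of a potential $\phi\colon V\to\mathbb{R}$ with $\phi_u-\phi_v=(f_u-f_v)\,\mathbbm{1}[\sigma_{uv}=-1]$; consequently $V$ splits into the connected components of the ``aligned'' edges (on which $\phi$ is constant) and those of the ``flipped'' edges (on which $\phi-f$ is constant). Intersecting the two partitions arranges the vertices on a grid in which each cell holds at most one vertex, every edge runs within a single row or a single column, each of the $p$ rows and $q$ columns is internally connected, and each occupied cell $(i,j)$ forces $f_v=a_i-b_j$ for some reals $a_i,b_j$. Thus it suffices to prove that w.h.p.\ $G_\tau$ admits no such nontrivial ($\ge 2\times 2$) grid arrangement whose induced $g$ is injective.

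The bottleneck of the hitting time is the $\Theta(\log n)$ vertices of degree exactly $2$. For these I would use a slaving observation: given the positions of the two neighbours $a,b$ of a degree-$2$ vertex $v$, the equations $|g(v)-g(a)|=|f(v)-f(a)|$ and $|g(v)-g(b)|=|f(v)-f(b)|$ have, since $g(a)\ne g(b)$, a unique common solution, so $g(v)$ is determined by $g(a)$ and $g(b)$. Since w.h.p.\ at $\tau$ the degree-$2$ vertices form a scattered independent set whose neighbours lie in the high-degree core $C=\{v:\deg_{G_\tau}(v)\ge 3\}$, any alternative embedding $g$ is determined on all of $[n]$ by its restriction to $C$; hence global rigidity of $(G_\tau[C],f|_C)$ implies that of $(G_\tau,f)$, and it is enough to rule out nontrivial flips of the core.

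Finally I would eliminate the remaining flips by a union bound that crucially exploits the order of quantifiers: $f$ is fixed before $G_\tau$ is revealed. A nontrivial grid arrangement on a set of $k$ vertices forces two strong types of events. Combinatorially, between any two rows the graph can contain only a column-respecting partial matching, rather than the roughly $p\cdot(\text{size})^2$ edges a random graph typically places, while each row and each column must nonetheless be internally connected; arithmetically, the relations $f_v=a_i-b_j$ impose about $k-(p+q)$ linear coincidences on the prescribed values $f_v$. Bounding, for each candidate partition type and vertex set, the probability that $G_\tau$ realises the required edges, non-edges, and connectivity, and summing over all types and sizes, should show that w.h.p.\ no nontrivial flip survives. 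I expect the main difficulty to be exactly this last step in the barely-$2$-connected regime: because $f$ may be highly structured (so that many arithmetic coincidences hold simultaneously), one cannot appeal to genericity of $f$ and must instead extract rarity purely from the edge restrictions imposed by the grid, controlling small rows and columns and the corners where the degree-$2$ vertices sit, together with the expansion of the dense core, so that the union bound converges at $p\sim\log n/n$.
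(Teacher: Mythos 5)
Your structural reduction is correct as far as it goes: the sign decomposition with potential $\phi=(f-g)/2$, the resulting row/column grid in which every edge is intra-row or intra-column, cells have size at most one, and each occupied cell pins $f_v$ to a difference $a_i-b_j$; the slaving of degree-$2$ vertices is also sound (two distinct anchors admit at most one common solution on the line, since two common solutions $x\neq y$ would force $x+y=2g(a)=2g(b)$), and w.h.p.\ at time $\tau$ the degree-$2$ vertices are indeed pairwise non-adjacent, so rigidity of the core would suffice. The genuine gap is that the entire probabilistic content of the theorem --- ruling out every nontrivial grid arrangement --- is left as a plan, and the plan as stated does not converge. A union bound over ``candidate partition types and vertex sets'' weighted only by the probability of the required non-edges diverges: for two balanced rows (hence $\Omega(n)$ columns of size at most $2$) the number of admissible partition pairs is $n^{(1/2+o(1))n}$ while the probability of avoiding the $\approx n^2/4$ cross pairs at $p\approx \log n/n$ is only about $n^{-n/4}$; for the opposite extreme, one giant row plus $k$ flipped vertices, the count is roughly $n^{2k}$ against an avoidance probability of $n^{-k}$. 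To make any such bound converge one must also charge the \emph{positive} requirements --- each row and column internally connected, and every vertex of the flipped part having degree at least $2$ inside its own row and column --- and this bookkeeping, which is exactly where the difficulty lives (you flag it yourself), is not carried out. It is further complicated by the fact that $G_\tau$ is a hitting-time object, not a product measure, so ``the probability that $G_\tau$ realises the required edges, non-edges, and connectivity'' cannot be computed pair by pair without an additional transfer argument.

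For context, the paper does not prove this statement at all: it is quoted from Gir\~ao, Illingworth, Michel, Powierski and Scott, and the paper's own contribution (its Theorem 1.2) is the stronger assertion that $G_\tau$ is w.h.p.\ globally rigid for \emph{all} injective $f$ simultaneously. Its proof avoids enumerating flip structures entirely: Lemma 2.1 is a deterministic criterion which intersects the left and right halves of the two embeddings, uses property (P1) to find a linear-sized connected component of the bipartite graph between $L_f\cap L_g$ and $R_f\cap R_g$ on which, after translating $g$, the two embeddings must agree, and then grows this agreement set vertex by vertex using property (P2), since a vertex with two agreed neighbours is determined. That single monotone growth argument replaces the global union bound that sinks your approach, and the randomness enters only through properties (P1) and (P2), which already hold at $\Theta(n)$ edges. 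If you want to rescue your proposal, the missing idea is precisely such a mechanism for converting the non-existence of exponentially many bad structures into the growth of one good agreement set.
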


The proof of this result follows the approach laid down by \cite{benjamini2022determining}. In our first main result we prove that $G_{\tau}$ with high probability not only reconstructs the  given embedding $f$ but reconstructs any embedding. This confirms the conjecture of Benjamini and Tzalik. Our proof strategy is different (and simpler) from the previous ones.

\begin{theorem} \label{thm:main}
 $G_{\tau}$ is globally rigid in $\mathbb{R}$ w.h.p.  
 \end{theorem}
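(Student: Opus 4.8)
The plan is to first reduce the geometric statement to a purely combinatorial one about the structure of $G_\tau$, and then to verify that combinatorial property using the standard structural features of the random graph process at the hitting time $\tau$ (two-connectivity, local sparsity, and expansion of balanced cuts), all of which hold w.h.p.

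For the reduction, suppose $(G,f)$ is not globally rigid, witnessed by an injective $g\colon V\to\mathbb{R}$ with $\|f(x)-f(y)\|=\|g(x)-g(y)\|$ for every edge $xy$, yet with $g$ not an isometry of $f$. Set $\alpha=(f-g)/2$ and $\beta=(f+g)/2$, so that $f=\alpha+\beta$ and $g=\beta-\alpha$ are both injective. For each edge $xy$ the equality $|f(x)-f(y)|=|g(x)-g(y)|$ forces either $f(x)-f(y)=g(x)-g(y)$, i.e.\ $\alpha(x)=\alpha(y)$, or $f(x)-f(y)=-(g(x)-g(y))$, i.e.\ $\beta(x)=\beta(y)$. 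Moreover $g$ is an isometry of $f$ exactly when $\alpha$ or $\beta$ is constant. Hence a counterexample yields non-constant $\alpha,\beta$ for which $x\mapsto(\alpha(x),\beta(x))$ is injective and every edge lies inside a level set of $\alpha$ (a ``row'') or of $\beta$ (a ``column''). In other words, a failure of global rigidity embeds $G$ into a rook's graph $K_r\,\square\,K_c$ with $r,c\ge 2$ and at most one vertex per cell. It therefore suffices to show that $G_\tau$ admits no such non-trivial rook embedding.

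I would establish this from three deterministic properties that $G_\tau$ enjoys w.h.p.: (i) $\delta(G_\tau)\ge 2$ and $G_\tau$ is $2$-connected (the relevant hitting times coincide); (ii) every vertex set $S$ has $e(S)\le |S|^2 p$ for large $S$ and $e(S)=O(|S|\log n/\log\log n)$ for all $S$; and (iii) every reasonably balanced cut has $\Omega(n\log n)$ crossing edges. Working with these fixed properties, rather than re-randomising, avoids a union bound over the exponentially many partitions. Given a rook embedding with row sizes $a_i$ and column sizes $b_j$, the distinct-cell condition gives $a_i\le c$ and $b_j\le r$; splitting $E(G_\tau)$ into row-edges and column-edges and applying (ii) yields $m\le p\,n\,(r+c)+o(n\log n)$, whence $r+c=\Omega(n)$. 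Thus at least one dimension, say the number of columns, is $\Omega(n)$: the grid is extremely degenerate, with almost all columns singletons. Property (i) rules out the degenerate pieces, since a vertex alone in both its row and its column would have degree $0$; and in the model two-row case the column-edges form a matching cut, which expansion forbids.

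The main obstacle is the final case analysis ruling out these degenerate rook embeddings, in particular the regime in which both dimensions are large. There neither the row-respecting nor the column-respecting cut is automatically sparse, and one must combine the cell constraint $a_i\le c$, the minimum-degree-two condition, and expansion at the appropriate scale to produce a balanced cut with $o(n\log n)$ crossing edges, contradicting (iii). A secondary technical point is to pin down the precise expansion and local-sparsity estimates for $G_\tau$ at the min-degree-two hitting time, and to check that the few low-degree vertices present there are consistent only with the trivial single-row or single-column embedding. Since ruling out all \emph{combinatorial} rook embeddings is stronger than ruling out only the geometrically realisable ones, this route needs no further appeal to the injectivity of $\alpha\pm\beta$ and yields global rigidity directly.
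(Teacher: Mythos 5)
Your reduction is correct and is genuinely different from the paper's route: writing $\alpha=(f-g)/2$, $\beta=(f+g)/2$, a failure of injective global rigidity in $\mathbb{R}$ is exactly an embedding of $G$ into a rook's graph with at least two nonempty rows, at least two nonempty columns, every edge inside a row or a column, and at most one vertex per cell (and conversely, any such combinatorial embedding is geometrically realisable by choosing generic row/column values, so this is an equivalent reformulation, not just a sufficient target). The paper instead proves a deterministic sufficient criterion (Lemma~\ref{lemma:main}: property \ref{prop:edge}, an edge between any two sets of size $n/15$, plus property \ref{prop:2nbr}, an outside vertex with two neighbours into any set of size between $n/15$ and $n-1$) via an ordering/propagation argument. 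However, your probabilistic half is not a proof: you explicitly defer the main case analysis (``the main obstacle''), and, more seriously, the three properties (i)--(iii) you propose to work from are provably insufficient to close it.

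Here is the concrete failure point. Following your own outline: since row-edges plus column-edges equal $m=(\tfrac12+o(1))n\log n$ and $e(S)\lesssim |S|^2p$, one of the two classes, say the rows, carries at least $m/2$ edges, whence $\sum_i a_i^2\geq m/(2p)\approx n^2/4$ and some row $R$ has $|R|\geq n/4$. Every edge leaving $R$ is a column edge and each column meets $R$ in at most one vertex, so the cut $(R,V\setminus R)$ has at most $\sum_j (b_j-1)\leq n$ crossing edges; your expansion property (iii) then forces $|V\setminus R|<\eps n$. At this point every vertex outside $R$ has \emph{at most one} neighbour in $R$, and to finish you must show that $G_\tau$ has no proper subset $U$ with $|U|>(1-\eps)n$ such that every outside vertex has at most one neighbour in $U$ --- which is precisely the paper's property \ref{prop:2nbr} for large sets. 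This does not follow from your (i)--(iii): $2$-connectivity is compatible with such configurations (the $4$-cycle is $2$-connected and admits exactly this two-row rook structure), and local sparsity and balanced-cut expansion say nothing about cuts whose small side has size $o(n)$, let alone size $O(1)$. At the hitting time $\tau$ this is a genuinely delicate fact about the graph's degree-$2$ vertices being scattered (no two of them adjacent or sharing a neighbourhood, no small dense patches around them); the paper does not reprove it but imports it from \cite[Proposition 2.3]{lew23randomrigid}. Your closing remark that one should ``check that the few low-degree vertices are consistent only with the trivial embedding'' gestures at this but supplies no argument; this missing step is the crux of the theorem at the hitting time, not a technicality, so the proposal as it stands has a genuine gap.
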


As the hitting time of $2$-connectivity is equal to $\tau$ w.h.p., our result shows that on the level of  ``typical" graphs there is no difference  in $\mathbb{R}$  between generic global rigidity and our less restrictive injective version. 
For the same reason, for random graphs sparser than $G_{\tau}$ one cannot hope in general for the reconstruction of the full point set. Were we content ourselves with reconstructing only almost every point, then we can do much better.  The bottleneck here again turns out to be $2$-connectivity. It is well-known that  a $2$-connected component of size $(1-o(1))n$ emerges in $G(n,p)$ when $p=\omega( 1/n)$.  
Gir\~{a}o et al.\ \cite{girao2023reconstructing} showed that  in the same regime it also holds that given any injective $f \colon [n] \rightarrow \mathbb{R}$, the random graph $G(n,p)$ w.h.p. reconstructs $f$ on some  subset $V' \subseteq [n]$ of size $|V'| = (1 - o(1))n$. 

They asked whether $1/n$ is also the threshold for the property that $G(n,p)$ reconstructs a constant fraction of points for {\em any} injective function $f$. 
Using our novel approach developed for Theorem~\ref{thm:main} we can show that this holds in a much stronger form. Namely, one can always reconstruct the same subset of vertices, independent of $f$, the size of which is a fraction of $n$ arbitrarily close to $1$. 

\begin{theorem} \label{thm:sparse}
For every  $\varepsilon >0$ there exists $C \in \mathbb{R}$ such that in the random graph $G\sim G(n,C/n)$ w.h.p. there exists a subset $V' \subseteq [n]$ of size $|V'| \geq (1-\varepsilon)n$ for which the induced subgraph $G[V']$ is globally rigid in $\mathbb{R}$. 
\end{theorem}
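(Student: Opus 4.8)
\section*{Proof strategy for Theorem~\ref{thm:sparse}}

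The plan is to deduce \Cref{thm:sparse} from the same combinatorial sufficient condition for global rigidity in $\mathbb{R}$ that drives our proof of \Cref{thm:main}, applied now to a large induced subgraph of $G\sim G(n,C/n)$. Recall the source of non-rigidity on the line: if an injective $g$ preserves every edge length of a framework $(H,f)$ but alters some pairwise distance, then $g$ is a nontrivial \emph{reflection pattern} of $f$, i.e.\ there is a partition $V(H)=U_1\sqcup\cdots\sqcup U_r$ with $r\ge 2$ and distinct reals $t_1,\dots,t_r$ such that $g=f+t_a$ on $U_a$. Preservation of the length of an edge $uv$ with $u\in U_a$ and $v\in U_b$ forces $t_a-t_b=-2\bigl(f(u)-f(v)\bigr)$, and hence the edges between any two classes form a matching all of whose members share one signed $f$-length. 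Consequently a $2$-connected graph that admits no such reflection pattern, for any injective $f$, is globally rigid in $\mathbb{R}$. It therefore suffices to exhibit, w.h.p., a set $V'\subseteq[n]$ with $|V'|\ge(1-\varepsilon)n$ such that $G[V']$ is $2$-connected and admits no reflection pattern.

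To build $V'$ I would fix $C=C(\varepsilon)$ large and pass to a subgraph that is w.h.p.\ (a) $2$-connected, (b) of minimum degree at least $3$, so that it contains no degree-$2$ vertex and, a fortiori, no ``bare path'' of degree-$2$ vertices, and (c) a strong edge-expander, in the sense that every $S$ with $1\le |S|\le |V'|/2$ satisfies $e(S,V'\setminus S)>|S|$. A natural candidate is a large expanding piece of the $3$-core, which for $C$ a large constant spans $(1-o_C(1))n\ge(1-\varepsilon)n$ vertices; the required properties are standard first-moment consequences of $C$ being large, and at most $\varepsilon n$ vertices are discarded in the process.

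It then remains to rule out reflection patterns in this core, which is the heart of the matter and parallels the corresponding step of \Cref{thm:main}. A two-class pattern is precisely a \emph{matching cut} $(A,B)$, and property~(c) forbids it outright: a matching cut has at most $\min(|A|,|B|)$ crossing edges, whereas expansion forces strictly more. With two classes thus excluded, any surviving pattern uses $r\ge 3$ classes; moreover by~(b) a class of size two already forces its complement to split further, so the remaining patterns are the genuinely multi-class ones. These are the hard case --- indeed deciding global rigidity on the line is co-NP-hard in general --- because neither minimum degree nor edge-expansion alone yields a contradiction. Here one must bring in the \emph{realizability} constraints: the matchings between the classes must all be assigned a common signed length by a single \emph{injective} map $f$, subject to the consistency relations imposed around the cycles of the quotient graph on the classes. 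My plan, following the analysis behind \Cref{thm:main}, is to show that in an expander core these constraints over-determine $f$ and force a coincidence $f(x)=f(y)$, contradicting injectivity. This final step, rather than the extraction of $V'$, is where the real work lies, and it is precisely the place where the injective (as opposed to generic) notion of rigidity enters.
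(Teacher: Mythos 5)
Your proposal contains a genuine gap, and it is exactly at the point you yourself flag as ``where the real work lies.'' After reducing the problem to excluding reflection patterns, you dispose of the two-class case (matching cuts) via expansion, and then for patterns with $r\ge 3$ classes you write only that your ``plan'' is to show the consistency constraints over-determine $f$ and force a collision with injectivity. No argument is given, and this multi-class case is precisely the mathematical content of the theorem: it is where the paper's proof does its work, via Lemma~\ref{lemma:main}. That lemma handles \emph{all} non-isometric $g$ at once by splitting the line into the left-half and right-half of both embeddings, finding a linear-sized connected component of the bipartite graph $G[L_f\cap L_g, R_f\cap R_g]$ using only the linear-set edge property \ref{prop:edge}, propagating the agreement $f=g'$ along that component (edges crossing from left-half to right-half have a determined sign), and then growing the agreement set one vertex at a time using \ref{prop:2nbr}: a vertex with two neighbours at which $f$ and $g'$ agree has its value pinned down. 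Nothing in your proposal substitutes for this propagation-and-growth argument, so the proposal is a reduction to the hard case plus a statement of intent, not a proof.

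Two further points would need repair even if the main gap were filled. First, your sufficient criterion is vacuous as stated: for any injective $f$ the map $g=-f$ preserves every edge length, and its level sets of $g-f=-2f$ give a ``reflection pattern'' with $r=n\ge 2$ distinct translations, so \emph{no} graph ``admits no such reflection pattern''; you must first normalise $g$ up to global reflection (the paper does this with $L_{-g}=R_g$). Second, the strict small-set expansion (c) you want for the $3$-core --- $e(S,V'\setminus S)>|S|$ for \emph{every} $S$ down to size $1$ --- is not a first-moment consequence of $C$ being large and in fact fails with probability bounded away from zero: $G(n,C/n)$ contains, with constant probability, a triangle all of whose vertices have degree exactly $3$; such a triangle survives to the $3$-core, has exactly $3=|S|$ edges leaving it, and if these form a matching it is exactly the configuration of Lemma~\ref{lem:CutNotRecontructible}, so the core itself is then not globally rigid. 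The paper sidesteps this entirely: Lemma~\ref{lemma:main} only requires expansion-type properties for sets of linear size, and the sparse-cut obstruction is removed by deleting a \emph{maximal} set $A$ with $|A|\le\eps n$ and $|N(A)|\le |A|$, whose maximality is what rescues property \ref{prop:2nbr} for the very large sets $U$. Your ``expanding piece of the $3$-core'' would need an analogous deletion step, specified and justified, rather than asserted.
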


Moreover, Gir\~{a}o et al.\ \cite{girao2023reconstructing} conjectured that for a given injective embedding it holds w.h.p. right after the point $p=1/n$ of the phase transition for the emergence of the giant $2$-connected component in $G(n,p)$, that a positive fraction of the points can also be reconstructed. 

\begin{conjecture}
\label{conj:Girao}Let $f : [n] \rightarrow \mathbb{R}$ be an arbitrary injective function and $\varepsilon >0$. Then for every $p \ge (1+\eps)/n$, in the random graph $G \sim G(n,p)$ w.h.p. there exists a subset $V' \subseteq [n]$ of size $|V'| = \Omega_{\varepsilon}(n)$ such that the restricted framework $(G[V'], f|_{V'})$ is globally rigid in $\mathbb{R}$.
\end{conjecture}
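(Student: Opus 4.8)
The plan is to isolate the purely combinatorial core of global rigidity in $\mathbb{R}$ and reduce Conjecture~\ref{conj:Girao} to a structural statement about the near-critical random graph. The starting point is the metric-free \emph{sequential pinning} principle underlying Theorems~\ref{thm:main} and~\ref{thm:sparse}: if a vertex $v$ has two neighbours $a,b$ whose positions are already determined (up to the global isometry), then $v$ is determined as well, for \emph{every} injective $f$. Indeed, $|g(v)-f(a)|=|f(v)-f(a)|$ has the spurious second root $2f(a)-f(v)$, and injectivity of $f$ (so $f(a)\neq f(b)$) makes it incompatible with the constraint from $b$, leaving $g(v)=f(v)$ as the unique common solution. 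Hence any subgraph that can be built from a triangle by repeatedly adding a vertex with at least two earlier neighbours is globally rigid for every injective $f$. This is the mechanism behind Theorem~\ref{thm:sparse}: at $p=C/n$ the giant is dense enough that such a ``$2$-buildable'' subgraph spans a $(1-\varepsilon)$-fraction of the vertices, and it disposes of the adversarial embedding for free.

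The difficulty at $p=(1+\varepsilon)/n$ is that this principle, used verbatim, cannot reach a linear number of vertices. A $2$-buildable graph on $m$ vertices has at least $2m-3$ edges, whereas the whole giant carries only $\Theta(\varepsilon^{3}n)$ edges in excess of a spanning forest; so any globally rigid piece obtained this way has $O(\varepsilon^{3}n)$ vertices and must essentially live on the kernel (the $\Theta(\varepsilon^{3}n)$ vertices of degree $\geq 3$ in the $2$-core). Worse, the kernel's edges are realised in $G$ as long induced paths of typical length $\Theta(1/\varepsilon)$, so two kernel vertices are joined by an \emph{actual} short connection (a path of length $\le 2$) only $\Theta(\varepsilon)$ of the time; the graph of short connections on the kernel is therefore subcritical, and contains no connected -- let alone globally rigid -- piece of size $\Omega_{\varepsilon}(n)$. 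Since a disconnected framework is never globally rigid, any proof must genuinely incorporate the long degree-$2$ paths, and this is where the fixed embedding $f$ re-enters.

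The route I would pursue is to pin the kernel by \emph{path trilateration}. Using that the kernel is w.h.p.\ essentially a random (near-)$3$-regular multigraph, one first establishes the combinatorial input: a positive-density subset $K'$ of kernel vertices admits an ordering, from a rigid triangular base, in which every vertex has at least two earlier neighbours \emph{through internally disjoint paths of the $2$-core}. A pinned endpoint together with such a path confines $v$ not to two points but to the reachable set $\{f(u)+\sum_i \sigma_i \ell_i:\sigma\in\{\pm1\}^{k}\}$ of signed partial sums of the path's edge lengths $\ell_i$; the two reachable sets coming from two already-pinned neighbours must be shown to meet only at the true value $f(v)$. One then takes $V'$ to be $K'$ together with the interiors of those paths that are themselves rigid for $f$ (a flexible path is truncated to its endpoints), so that $G[V']$ is connected, globally rigid, and of size $\Omega_{\varepsilon}(n)$.

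The main obstacle is precisely the uniqueness of these intersections against an adversarial $f$. A path of length $k$ has a reachable set of up to $2^{k}$ points, and for the typical length $k=\Theta(1/\varepsilon)$ this is enormous; a priori two such sets share many points, and an adversary fixing $f$ to manufacture length coincidences could try to render every trilateration step ambiguous. The one structural advantage is that $f$ is committed \emph{before} the random graph, so the increments $f(w_{i+1})-f(w_i)$ along a path are differences of uniformly random distinct values of $f$; for any injective $f$ these increments are spread out, and a Littlewood--Offord-type anticoncentration bound for $\sum_i \sigma_i(f(w_{i+1})-f(w_i))$ should show that the reachable sets, though large, are spread enough that spurious coincidences survive neither a single step nor a union bound over the $\Theta(\varepsilon^{3}n)$ steps. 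Turning this heuristic into a theorem -- in particular, controlling the flips that mix \emph{several} paths at once, which are neither single-path-internal nor cut-type and so escape both the anticoncentration estimate and the $3$-edge-connectivity of the kernel -- is, in my view, the crux, and the reason the conjecture remains open.
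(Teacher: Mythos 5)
First, a point of orientation: the statement you were asked to prove is Conjecture~\ref{conj:Girao}, which the paper itself does \emph{not} prove --- the authors state explicitly that they do not settle it, and instead prove (Theorem~\ref{thm:LowerBoundGlobalRigidity}) that its strengthening from the fixed framework $(G[V'],f|_{V'})$ to graph global rigidity (all injective $f$ simultaneously) is \emph{false} for $p<1.1/n$. So there is no paper proof to compare against, and your text, read as a proof, contains a genuine gap --- one you candidly identify yourself. Your diagnosis is largely sound: the two-pinned-neighbours step is exactly the pinning mechanism inside Lemma~\ref{lemma:main} (though the paper seeds it with the left-half/right-half component argument, not with a triangle), it works for every injective $f$, and it cannot by itself reach $\Omega_{\varepsilon}(n)$ vertices at $p=(1+\varepsilon)/n$ because almost every vertex of the $2$-core has degree $2$. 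Moreover, Theorem~\ref{thm:LowerBoundGlobalRigidity} shows that \emph{no} argument producing an induced subgraph that is globally rigid in the for-all-$f$ sense on $\ge\gamma\log n$ vertices can succeed at this density, so any proof must use the given $f$ essentially; for the same reason your concluding phrase that $G[V']$ is ``connected, globally rigid'' can only be read as rigidity of the framework for this particular $f$ --- as a statement about the graph it would contradict Theorem~\ref{thm:LowerBoundGlobalRigidity}.

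The concrete gap is the uniqueness step of your path trilateration: showing that the two reachable sets $\{f(u)+\sum_i\sigma_i\ell_i : \sigma\in\{\pm1\}^k\}$ and $\{f(u')+\sum_j\sigma'_j\ell'_j : \sigma'\in\{\pm1\}^{k'}\}$ meet only at $f(v)$. Anticoncentration is the wrong tool here, for two reasons. First, Littlewood--Offord-type bounds control the probability of a coincidence when the \emph{signs} are random, but what you need is that \emph{no} spurious pair among the $2^{k+k'}$ sign choices produces a coincidence --- a universally quantified statement --- and the only randomness available is the identity of the path vertices, since $f$ is adversarial; the adversary may take $f$ lattice-valued (say $f(i)=i$), so that all edge lengths are integers and exact coincidences among signed sums are in no way ``non-generic''. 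Second, even granting a per-step failure bound, the best such bounds over sign choices are of order $1/\sqrt{k}$ with $k=\Theta(1/\varepsilon)$ a constant, while a union bound over the $\Theta_{\varepsilon}(n)$ pinning steps needs $o(1/n)$ per step (or a sequential argument robust to a constant fraction of failed pins, which your ordering-based construction does not provide, since a failed pin blocks all later vertices that depend on it). Add to this the cross-path sign flips you mention, which no per-path or connectivity argument excludes, and the proposal remains what you call it: a plausible research plan that locates the difficulty, consistent with the conjecture still being open, but not a proof.
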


We do not settle this conjecture, but we show that its strengthening to global rigidity  does not hold.

\begin{theorem}
\label{thm:LowerBoundGlobalRigidity}   There exists $\gamma >0$ such that for any $p < 1.1/n$, the random graph $G \sim G(n,p)$ w.h.p. satisfies that for every subset $V' \subseteq [n]$ of size $|V'| \geq \gamma \log n$, the induced subgraph $G[V']$ is not globally rigid in $\mathbb{R}$. 
\end{theorem}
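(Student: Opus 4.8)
The plan is to combine a purely combinatorial lemma producing cheap certificates of non-rigidity (``folds'') with a moment analysis of the sparse random graph. Recall that to show an induced subgraph $H=G[V']$ is \emph{not} globally rigid in $\mathbb{R}$ it suffices to produce a single injective $f$ for which $(H,f)$ admits a distance-inequivalent re-embedding $g$. I would first isolate four structural certificates, each giving such an $f$: (i) $H$ has a vertex of degree at most $1$; (ii) $H$ is disconnected; (iii) $H$ has a cut vertex; (iv) $H$ has an \emph{independent $2$-edge-cut}, i.e.\ two vertex-disjoint edges whose deletion disconnects $H$ into two nonempty parts $A,B$. Case (i) follows by reflecting the low-degree vertex about its neighbour, (ii) by translating one component, and (iii) by reflecting one side about the image of the cut vertex; these preserve all edge lengths while changing some non-edge distance.

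Case (iv) is the one needing care, and it generalises the classical non-rigidity of $C_4$. Writing $e_i=a_ib_i$ with $a_i\in A$, $b_i\in B$, I would choose an injective $f$ for which the two cut edges have a common signed length $\lambda=f(a_1)-f(b_1)=f(a_2)-f(b_2)$, and then translate the whole of $B$ by $2\lambda$, leaving $A$ fixed. A direct check shows every internal edge of $A$ and of $B$ is preserved and both cut edges merely reverse orientation (their lengths are preserved), while the relative position of $B$ to $A$ genuinely changes; choosing $\lambda$ large keeps $g$ injective. Hence an independent $2$-edge-cut always yields a fold.

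From these certificates I would extract the structural consequence: a globally rigid $H$ must be $2$-connected, have minimum degree at least $2$, and contain no independent $2$-edge-cut. The last condition forces every maximal path of degree-$2$ vertices to have at most one internal vertex, since otherwise deleting its first and third edges is an independent $2$-edge-cut. Suppressing the degree-$2$ vertices then yields a multigraph of minimum degree at least $3$, and a short count gives $|V(H)|\le 5(\mu(H)-1)$, where $\mu(H)=|E(H)|-|V(H)|+1$ is the cyclomatic number; equivalently $|E(H)|\ge \tfrac65|V(H)|-1$, the subdivided $K_4$ showing the density $\tfrac65$ is tight. It therefore suffices to prove that w.h.p.\ $G\sim G(n,p)$ with $p<1.1/n$ has no induced subgraph $H$ with $|V(H)|\ge\gamma\log n$ and $\mu(H)\ge |V(H)|/5$.

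For $|V(H)|$ up to some small $\delta n$ this follows from a union bound over vertex sets combined with the upper tail of $\mathrm{Bin}\!\left(\binom{s}{2},p\right)$: the $n^{-(\mu-1)}$ saving coming from the surplus edges beats the $\binom{n}{s}$ choices. For larger $H$ I would instead use that the cyclomatic number is subgraph-monotone, $\mu(H)\le\mu(G)$, and that $\mu(G)=|E(G)|-n+(\#\text{components})$ concentrates around $\rho(c)\,n$ with $\rho(c)=O((c-1)^3)$, so $|V(H)|\le 5\mu(H)\le 5\rho(c)\,n$ excludes all $H$ beyond size $5\rho(c)n$. The hard part will be making these two ranges meet: the first-moment bound degrades exactly in the linear regime, where its expectation exceeds $1$ once $s\gtrsim \delta_0 n$ for an absolute constant $\delta_0>0$, so everything hinges on $\rho(c)$ being small enough that $5\rho(c)<\delta_0$. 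Since $\rho(c)\to0$ like $(c-1)^3$ as $c\to1$ while $\delta_0$ stays bounded away from $0$, the scheme goes through for all $c$ sufficiently close to $1$; the delicate quantitative point — and the reason for the specific constant $1.1$ — is to pin down the sharp Chernoff exponent and the exact excess density precisely enough to verify that $c=1.1$ lies in this range.
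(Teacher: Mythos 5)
Your structural reduction is sound as far as it goes: certificates (i)--(iv) are all valid (the translation-by-$2\lambda$ argument for an independent $2$-edge-cut is correct), and a globally rigid graph on at least five vertices is indeed $2$-connected with its degree-$2$ vertices forming an independent set, which yields $|E(H)|\ge\tfrac{6}{5}|V(H)|$. The genuine gap is exactly the quantitative point you deferred, and it cannot be verified because it is false at $c=1.1$: your two ranges do not meet. Concretely, for $s=xn$ and $p=c/n$ the expected number of $s$-sets spanning at least $\tfrac65 s$ edges is $e^{s(\varphi(x)+o(1))}$ where $\varphi(x)=\ln\tfrac{e}{x}+\tfrac65\ln\bigl(\tfrac{5ecx}{12}\bigr)-\tfrac{(1+c)x}{2}$, so the first-moment threshold is $\delta_0(c)=(1+o(1))\,e^{-5}\bigl(\tfrac{12}{5ec}\bigr)^{6}\approx 0.0018$ at $c=1.1$; and these estimates are tight to exponential order, so no sharpening of the Chernoff or entropy bounds can move this. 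On the other side, $\mu(G)/n$ concentrates at $\rho(c)=\tfrac{c}{2}-1+\tfrac1c\bigl(c^{*}-\tfrac{(c^{*})^{2}}{2}\bigr)$ where $c^{*}<1$ solves $c^{*}e^{-c^{*}}=ce^{-c}$; at $c=1.1$ this gives $c^{*}\approx 0.906$ and $\rho\approx 0.00055$, so your scheme must exclude dense induced subgraphs up to size $5\rho n\approx 0.00275n$, while the union bound provably fails beyond $0.0018n$ (indeed $\varphi(0.00275)\approx +0.08>0$, so in the window $[0.0018n,\,0.00275n]$ the expectation you would need to bound is $e^{\Theta(n)}$). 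Solving $5\rho(c)<\delta_0(c)$ shows your argument goes through only for $c\lesssim 1.09$. Whether density-$\tfrac65$ induced subgraphs truly exist in that window is beside the point: your tool cannot rule them out, and closing the window would require a genuinely finer count (say, of $2$-connected graphs by excess), which is far beyond ``a union bound over vertex sets combined with the upper tail of the binomial''.

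The natural repair is to strengthen certificate (iv), and this is what the paper does. Its Lemma~\ref{lem:CutNotRecontructible} shows that \emph{any} partition $V(H)=A\cup B$ with $\Delta(H[A,B])\le 1$ (a matching cut of arbitrary size, not just size $2$) witnesses non-rigidity; your $2\lambda$-translation generalizes verbatim once all cut edges are given a common signed length. Graphs with no matching cut have at least $\lceil 3(|V|-1)/2\rceil$ edges (a classical result of Farley and Proskurowski), and rerunning your first moment with density $\tfrac32$ gives a threshold $e^{-2}\bigl(\tfrac{3}{ec}\bigr)^{3}\approx 0.137$, which dwarfs the corresponding requirement $2\rho n\approx 0.0011n$; with that single upgrade your global scheme would close with an enormous margin. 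The paper itself avoids global counting altogether: by a modified exploration process from each vertex $v$ (neighbours discovered along a single edge are parked in a holding set rather than expanded), its Lemma~\ref{lem:GW2} shows that w.h.p.\ every $v$ lies in a set $A_v$ with $|A_v|\le 6000\log n$ whose edge-boundary in $G$ is a matching; since a matching cut of $G$ restricted to any $V'\ni v$ with $|V'|\ge 10^4\log n$ is still a matching cut of $G[V']$, every large induced subgraph is non-rigid, with no delicate constants anywhere in the argument.
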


We prove Theorem~\ref{thm:main} and Theorem~\ref{thm:sparse} in Section~\ref{sec:proof}. In Section~\ref{sec:not_rigid} we prove Theorem~\ref{thm:LowerBoundGlobalRigidity}, before finishing with some open problems in Section~\ref{sec:open}.

\paragraph{Acknowledgement.} The first, second and fourth authors thank the research institute MATRIX, in Creswick, Australia, where part of this research was performed, for its hospitality, and the organisers and participants of the workshop on {\em Extremal Problems in Graphs, Designs, and Geometries} for a stimulating research environment. The third author is extremely grateful to Julian Sahasrabudhe for suggesting the problem and for his support. The authors also thank Douglas Barnes, Thomas Lesgourgues, Brendan McKay, Jan Petr, Benedict Randall Shaw, Marcelo De Sa Oliveira Sales and Alan Sergeev for productive discussions.

\section{Criteria for global rigidity}\label{sec:proof}

The following lemma is the crux of our proofs of Theorem \ref{thm:main} and Theorem \ref{thm:sparse}, which are then derived as simple corollaries from it.

\begin{lemma} \label{lemma:main}
    Let $G$ be a graph with $V(G)=[n]$, and suppose it satisfies the following two properties:
    \begin{enumerate}[(P1)]
        \item \label{prop:edge} For every disjoint $U, W \subseteq V(G)$ of size $|U|,|W| \ge n/15$ there is an edge in $G$ between $U$ and $W$.
        \item \label{prop:2nbr} For every $U \subseteq V(G)$ of size $n/15 \le |U| < n$, there exists a vertex $v \in V(G) \setminus U$ with at least two neighbors in $U$.
    \end{enumerate}
Then $G$ is globally rigid in $\mathbb{R}$.
\end{lemma}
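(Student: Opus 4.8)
The plan is to take two injective maps $f,g\colon V\to\mathbb{R}$ that agree on all edge-lengths and to show they differ by an isometry of the line, i.e.\ $g=\pm f+c$. The natural device is to linearise the quadratic constraint: for each edge $xy$ we have $(f(x)-f(y))^2=(g(x)-g(y))^2$, which factors as $(\alpha(x)-\alpha(y))(\beta(x)-\beta(y))=0$, where $\alpha:=f-g$ and $\beta:=f+g$. Thus every edge is \emph{red} ($\alpha$ constant on it) or \emph{blue} ($\beta$ constant on it). Since $f,g$ are injective, the map $v\mapsto(\alpha(v),\beta(v))$ is injective, so thinking of each vertex as a point in the plane with coordinates $(\alpha(v),\beta(v))$, red edges are vertical segments and blue edges are horizontal segments. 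The goal becomes: all points share an $\alpha$-coordinate (then $g=f-c$) or all share a $\beta$-coordinate (then $g=c-f$); either way $g$ is an isometry and every pairwise distance is preserved, which is exactly global rigidity. So I would assume for contradiction that $\alpha$ and $\beta$ each take at least two values.

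First I would use \ref{prop:2nbr} to rule out a large level set. If some $\alpha$-class $A$ (a set on which $\alpha$ is constant) had $|A|\ge n/15$ and $A\neq V$, then any $v\notin A$ can have at most one neighbour in $A$: an edge from $v$ into $A$ cannot be red, since that would force $\alpha(v)$ to equal the $\alpha$-value of $A$, so it is blue, and two blue edges from $v$ to $A$ would give two vertices of $A$ with the same $\beta$ (and the same $\alpha$), hence equal by injectivity. This contradicts \ref{prop:2nbr} applied to $U=A$. The identical argument with the roles of $\alpha,\beta$ swapped shows no $\beta$-class reaches size $n/15$. Hence every $\alpha$-class and every $\beta$-class has fewer than $n/15$ points.

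The second, and I expect harder, step is to manufacture a violation of \ref{prop:edge} from this. Because each $\alpha$-class is smaller than $n/15<n/3$, sweeping a vertical threshold $s$ lets me split the points into $\{\alpha\le s\}$ and $\{\alpha>s\}$ with each side of size at least $n/3$; likewise a horizontal threshold $t$ balances $\{\beta\le t\}$ and $\{\beta>t\}$. These thresholds cut the plane into four open quadrants, and I claim one pair of opposite quadrants each contains at least $n/15$ points. Otherwise each diagonal pair contains a quadrant of size below $n/15$; since one such small quadrant is chosen from each diagonal, the two small quadrants necessarily lie in a common half, pushing that half below $2n/15<n/3$, a contradiction. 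Finally, two opposite quadrants are separated in both coordinates, so no red (vertical) or blue (horizontal) edge can join them; these two sets are disjoint, each of size at least $n/15$, and have no edge between them, contradicting \ref{prop:edge}.

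This contradiction forces $\alpha$ or $\beta$ to be constant, completing the argument. The crux is the opening linearisation, which converts the metric reconstruction problem into a statement about axis-aligned edges in the plane; once that picture is in place, \ref{prop:2nbr} eliminates large level sets and the quadrant pigeonhole turns \ref{prop:edge} into the final contradiction.
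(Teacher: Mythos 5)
Your proof is correct, and it takes a genuinely different route from the paper's. Your key move is the factorisation $(f(x)-f(y))^2-(g(x)-g(y))^2=(\alpha(x)-\alpha(y))(\beta(x)-\beta(y))$ with $\alpha=f-g$ and $\beta=f+g$, which turns the problem into one about axis-aligned edges on the injectively embedded point set $\{(\alpha(v),\beta(v)) : v \in [n]\}$: every edge is ``red'' ($\alpha$-preserving) or ``blue'' ($\beta$-preserving), and global rigidity amounts to $\alpha$ or $\beta$ being constant. You then use \ref{prop:2nbr} to show that every maximal level set of $\alpha$ and of $\beta$ has fewer than $n/15$ vertices (a vertex outside such a level set can send at most one edge into it, by injectivity of $v\mapsto(\alpha(v),\beta(v))$), and you use \ref{prop:edge} for the contradiction: since all level sets are small, thresholds can be chosen so that each of the four half-planes contains at least $n/3$ points, the pigeonhole on the four quadrants produces two opposite quadrants each of size at least $n/15$, and opposite quadrants can carry neither red nor blue edges. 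The paper argues in the opposite order and more dynamically: after normalising $g$ (possibly replacing it by $-g$) so that the common left halves $L=L_f\cap L_g$ and right halves $R=R_f\cap R_g$ both have size at least $n/5$, it uses \ref{prop:edge} to find a connected component $C$ of the bipartite graph $G[L,R]$ with at least $n/15$ vertices, translates $g$ to agree with $f$ at one vertex of $C$, observes that agreement propagates along edges of $C$ (an edge from the left half to the right half has a forced sign), and finally uses \ref{prop:2nbr} to grow the agreement set one vertex at a time, a vertex with two agreed neighbours having its position determined by the two distances. So the two hypotheses play swapped roles in the two arguments: you use \ref{prop:2nbr} to kill large level sets and \ref{prop:edge} to finish, while the paper uses \ref{prop:edge} to seed a large agreement set and \ref{prop:2nbr} to complete it. Your version dispenses entirely with the ordering, translation and propagation bookkeeping and yields a clean static contradiction; the paper's version is more constructive in spirit, exhibiting the isometry by explicitly growing the set where the two embeddings coincide. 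Both serve equally well for the paper's applications, since those only invoke the lemma as a black box.
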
  


\begin{proof} 
The result holds trivially for $n = 1$, thus we assume $n > 1$. Let $f$ and $g:[n] \rightarrow \mathbb{R}$ be two injective functions such that $|f(x)-f(y)| = |g(x)-g(y)|$ for every edge $xy\in E(G)$. We show that the same equality holds for any two vertices $x,y\in V(G)$, i.e. $f$ and $g$ are isometric. Equivalently, this means that there exists $a\in \{ 1, -1\}$ and $b \in \mathbb{R}$ such that $f = a f' + b$. Let
\begin{align*}
    L_f &:= \bigl\{ i \in [n] : |\{ x \in [n] : f(i) < f(x) \}| \geq \lceil n/2 \rceil  \bigr\} \\
    R_f &:= \bigl\{ i \in [n] : |\{ x \in [n] : f(x) < f(i) \}| \geq \lceil n/2 \rceil \bigr\}
\end{align*}
be the {\em left-half} and {\em right-half} of $f$ (omitting the middle vertex when $n$ is odd), and define $L_g$ and $R_g$ analogously. 

We can assume without loss of generality that $|L_f\cap L_g|\ge \lceil (|L_f|-1)/2\rceil \ge \lceil (n - 3) / 4 \rceil$. Otherwise we consider instead the function $-g$, which is isometric to $g$, and use that $L_{-g}=R_g$. As $n > 1$, by \ref{prop:2nbr} we necessarily have $n>15$ (otherwise $G$ does not satisfy \ref{prop:2nbr}), thus $|L_f\cap L_g| > n/5$. Then $|R_f\cap R_g| = |R_f| + |R_g| - |R_f\cup R_g|\geq n-1 - |\overline{(L_f\cap L_g)}| \geq n/5$. Set $L := L_f \cap L_g$ and $R := R_f \cap R_g$.

We first prove that the induced bipartite graph~$G[L, R]$ contains a connected component spanning at least $n/15$ vertices. Let $C^{(1)},\ldots,C^{(k)}$ be any ordering of the connected components of~$G[L, R]$. Toward a contradiction, assume that each connected component contains fewer than $n/15$ vertices, i.e.\ for all $j\in[k]$, we have $|C^{(j)}| < n/15$. Let $i>1$ be the smallest index such that 
$$
    \sum_{j=1}^i|C^{(j)}\cap L| \ge n/15 \quad \text{or} \quad \sum_{j=1}^i|C^{(j)}\cap R| \ge n/15.
$$
Without loss of generality assume that $\sum_{j=1}^i|C^{(j)}\cap R|\leq \sum_{j=1}^i|C^{(j)}\cap L|$. Then, using $|C^{(j)}| < n/15$ for all $j\in [k]$, by the minimality of $i$ we have
$$
    \sum_{j=1}^i|C^{(j)}\cap R|\leq \sum_{j=1}^i|C^{(j)}\cap L| < 2n/15.
$$
As $\sum_{i = 1}^k |C^{(j)} \cap R| = |R| \ge n / 5$, we then have
$$
    \sum_{j=i+1}^k|C^{(j)}\cap R|\geq n/15.
$$
Then by \ref{prop:edge} there exists an edge between $\cup_{j=1}^iC^{(j)}\cap L$ and $\cup_{j=i+1}^kC^{(j)}\cap R$, which contradicts the assumption that $C^{(1)}, \ldots, C^{(k)}$ are the connected components of $G[L, R]$. 

Let $C$ be the vertices of the largest connected component of~$G[L, R]$. As we have just showed, $|C|\geq n/15$. Let $y_1 \in C \cap L$ be an arbitrary vertex and let $g' = g - g(y_1) +f(y_1)$, i.e.\ the translation of $g$ that agrees with $f$ on $y_1$. As this is not changing the relative order with respect to $g$, we have $L_g=L_{g'}$ and $R_g=R_{g'}$. Let $U:= \{ u \in [n]: f(u) = g'(u) \}$ be the set of vertices on which $f$ agrees with $g'$. By the definition, we have $y_1 \in U$. We claim that the whole connected component $C$ is contained in $U$. This is because for any vertex $x\in U$ and edge $xy\in E(G)$ of $C$, we also have $y\in U$: Suppose first that $x\in L_f\cap L_{g'}$ and $y\in R_f\cap R_{g'}$; the other case is analogous. 
Since $x$ is in the left-half of $f$, $y$ is in the right-half of $f$, we have $f(y) = f(x) + |f(x)-f(y)|$. Similarly, we obtain $g'(y) = g'(x) + |g'(x)-g'(y)|$. Using that  $|f(x)-f(y)|=|g'(x)-g'(y)|$ and $x\in U$, we conclude that $f(y) = g'(y)$, so $y\in U$.

Now $C \subseteq U$ implies $|U| \geq n/15$. If $U=[n]$ we are done. Otherwise \ref{prop:2nbr} can be applied and we take a vertex $v\in V(G) \setminus U$ which has two neighbors $u_1, u_2$ in $U$. Assume, by relabelling if necessary, that $f(u_1)=g'(u_1)<f(u_2)=g'(u_2)$.
The $f$-value of $v$ is determined by $f(u_1), f(u_2), |f(u_1)-f(v)|$ and $|f(u_2)-f(v)|$. Indeed,
depending on whether 
$f(u_2)$, $f(u_1)$ or $f(v)$ is in between the other two, $f(v)$ is equal to $f(u_1) + |f(u_1)-f(v)| = f(u_2)+ |f(u_2)-f(v)|$, $f(u_1) -  |f(u_1)-f(v)| = f(u_2)- |f(u_2)-f(v)|$, or $f(u_1) + |f(u_1)-f(v)| = f(u_2)- |f(u_1)-f(v)|$, respectively. The $g'$-value of $v$ is determined by $g'(u_1), g'(u_2), |g'(u_1)-g'(v)|$ and $|g'(u_2)-g'(v)|$ analogously. Finally, since $f(u_1)=g'(u_1)$ and $f(u_2)=g'(u_2)$, the values $f(v)$ and $g'(v)$ coming from the analogous formulas also agree. 
That means $v\in U$, contradicting $v\in V(G) \setminus U$. Thus, $U=V(G)$, and therefore, as $f=g'$, $f$ and $g$ are isometric.
\end{proof}

\subsection{Applications}

We need the following simple property of random graphs.

\begin{lemma} \label{lemma:random}
    For every $\eps > 0$ there exists $C > 0$ such that if $m \ge C n$, then $G \sim G(n,m)$ w.h.p.\ has the following property: 
    \begin{enumerate}[(P1)]
        \setcounter{enumi}{2}
        \item \label{p3} For every disjoint $X, Y \subseteq V(G)$ of size $|X|, |Y| \ge \eps n$, there exists an edge between $X$ and $Y$ in $G$.
    \end{enumerate}
\end{lemma}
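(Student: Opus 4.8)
The plan is a routine first-moment (union bound) argument. Property \ref{p3} fails precisely when there exist disjoint sets $X, Y$, each of size at least $\eps n$, with no edge of $G$ between them. Since any such pair contains a pair of subsets of size exactly $k := \lceil \eps n \rceil$, and an edge between the subsets is in particular an edge between the supersets, it suffices to bound the probability that some pair of disjoint $k$-element sets has no edge between them.

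First I would fix one such pair $(X, Y)$ and estimate the probability $q$ that $G \sim G(n, m)$ contains no edge between them. Writing $N = \binom{n}{2}$ for the number of potential edges and $s = |X|\,|Y| = k^2$ for the number of potential edges between $X$ and $Y$, in the $G(n,m)$ model we select $m$ edges uniformly without replacement among the $N$ pairs, so
$$
  q = \frac{\binom{N - s}{m}}{\binom{N}{m}} = \prod_{i=0}^{m-1} \frac{N - s - i}{N - i} \le \left(1 - \frac{s}{N}\right)^{m} \le e^{-sm/N},
$$
where each factor is maximised at $i = 0$. With $s = k^2 \ge \eps^2 n^2$, $N \le n^2/2$ and $m \ge Cn$, this yields $q \le e^{-2C\eps^2 n}$.

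Next I would take a union bound over all admissible pairs $(X, Y)$. Their number is at most $\binom{n}{k}^2 \le 4^n = e^{n\ln 4}$, so the probability that \ref{p3} fails is at most $e^{n(\ln 4 - 2C\eps^2)}$. Choosing $C$ large enough that $2C\eps^2 > \ln 4$ (for example $C > (\ln 2)/\eps^2$) makes this bound tend to $0$ as $n \to \infty$, proving the claim.

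This argument presents no genuine obstacle; the only points needing mild care are the without-replacement sampling of $G(n,m)$, handled by the telescoping product above, and ensuring that the exponential decay rate $2C\eps^2$ beats the entropy term $\ln 4$ arising from the number of set pairs. It is precisely this comparison that dictates how large $C$ must be chosen as a function of $\eps$.
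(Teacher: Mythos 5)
Your proof is correct and follows essentially the same route as the paper: bound the probability that a fixed pair $(X,Y)$ spans no edge by the ratio of binomial coefficients, estimate it by $e^{-|X||Y|m/\binom{n}{2}}$, and union bound over at most $4^n$ pairs, choosing $C$ so the exponential decay beats the entropy term. The only cosmetic difference is your reduction to sets of size exactly $\lceil \eps n \rceil$, which the paper sidesteps by taking the union bound over pairs of arbitrary size directly.
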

\begin{proof}
    For fixed $X$ and $Y$, the probability that there is no edge between $X$ and $Y$ is
    $$
        \binom{\binom{n}{2} - |X||Y|}{m} / \binom{\binom{n}{2}}{m} \le e^{-|X||Y| m  / n^2} \le e^{-\eps^2 C n}.
    $$
    There are at most $2^{2n}$ ways to choose $X$ and $Y$, thus, for $C > 2 / \eps^2$, w.h.p.\ this bad event does not happen for any such pair of sets.  
\end{proof}

With Lemma \ref{lemma:main} and Lemma \ref{lemma:random} at hand, the proofs of Theorems \ref{thm:main} and \ref{thm:sparse} are straightforward.

\begin{proof}[Proof of Theorem \ref{thm:main}]
For the proof of Theorem~\ref{thm:main} we check that w.h.p.\ both \ref{prop:edge} and \ref{prop:2nbr} hold for $G_{\tau}$, so that the result follows directly by Lemma~\ref{lemma:main}. 

Let $C$ be a constant given by Lemma \ref{lemma:random} for $\eps = 1/31$. It is well known~\cite{bollobasbook} that w.h.p.\ $\tau \ge C n := m$ (with $C$ as given by Lemma~\ref{lemma:random}, or, indeed, any constant $C$). As $G_m$ is uniformly distributed among all graphs with $n$ vertices and exactly $m$ edges, by Lemma \ref{lemma:random} we have that w.h.p.\ \ref{p3} holds in $G_m$. Since \ref{p3} is monotone, it also holds in $G_\tau$.

Property \ref{p3} is straightforwardly stronger than \ref{prop:edge} and also implies \ref{prop:2nbr} in the case $n/15 \le |U| \le n/2$. Indeed, for the latter let $S \subseteq V(G) \setminus U$ be a subset of size $\eps n$. By \ref{p3} we have
$$
    |N(S) \cap U| \ge |U| - \eps n > |S|,
$$
thus there exists a vertex in $S$ with two neighbors in $U$. The remaining case $|U| > n/2$ of the property \ref{prop:2nbr} is proven to hold w.h.p., for example, in \cite[Proposition 2.3]{lew23randomrigid}.
\end{proof}

\begin{proof}[Proof of Theorem \ref{thm:sparse}]
For convenience we prove our result for the $G(n,m)$ random graph model  where a graph is chosen uniformly at random among all labeled graphs with $n$ vertices and $m$ edges. Namely, for every  $\varepsilon >0$ we show that there exists $C \in \mathbb{R}$ such that for $G\sim G(n,Cn)$ w.h.p. both \ref{prop:edge} and \ref{prop:2nbr} hold. The equivalent statement for $G(n,p)$ follows by \cite[Theorem 1.4]{frieze16book}.

We can assume $\eps > 0$ is sufficiently small, take $C$ from Lemma~\ref{lemma:random} applied to $\eps$ and let $m\geq Cn$. By Lemma \ref{lemma:random},  $G \sim G(n,m)$ w.h.p.\ has the property \ref{p3}. This immediately implies \ref{prop:edge}, thus to apply Lemma \ref{lemma:main} we just need to find a large subset $V' \subseteq V(G)$ such that $G'=G[V']$ satisfies \ref{prop:2nbr}. 
        We define $V':= V(G) \setminus A$, where $A \subseteq V(G)$ is a largest subset such that $|A| \le \eps n$ and $|N(A)| \le |A|$.

    To check \ref{prop:2nbr} for a subset $U\subseteq V'$ of size $|V'| / 15 \le |U| < |V'| - \eps n$ we consider a subset $S \subseteq V' \setminus U$ of size $\eps n \leq |V'\setminus U|$. Applying \ref{p3} we have
    $$
        |N(S) \cap U| \ge |U| - \eps n > |S|,
    $$
    which verifies that there is a vertex in $S$ with two $G'$-neighbors in $U$, for otherwise $|N(S)\cap U|\leq |S|$.

If $U \subseteq V'$ is of size $|U| \ge |V'| - \eps n$, then we claim that for $S=V'\setminus U$ we have $|N(S) \setminus A| > |S|$, which in turn implies that some vertex of $S$ has two neighbors in $U$. 
Otherwise we have 
    $$
        |N(A \cup S)| \le |A| + |S| = |A\cup S|,
    $$
    which implies $\eps n < |A \cup S| \le 2 \eps n$ by the maximality of $A$. We can then apply \ref{p3} to obtain
    $$
        |N(A \cup S)| \ge n - |A \cup S| - \eps n > |A \cup S|,
    $$
    a contradiction. 
\end{proof}

\section{Sparse random graphs are typically far from globally rigid}
\label{sec:not_rigid}

The next lemma gives a general condition under which a graph is not globally rigid in $\mathbb{R}$.  

\begin{lemma}
\label{lem:CutNotRecontructible}
    Suppose the vertex set of a graph $G\neq K_2$ can be partitioned into non-empty sets $A$ and $B$ such that  the maximum degree of $G[A,B]$ is at most $1$.
    Then $G$ is not globally rigid.
\end{lemma}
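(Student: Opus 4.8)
The plan is to exhibit, for an arbitrary injective $f\colon V(G)\to\mathbb{R}$, a second injective embedding $g$ that is non-isometric to $f$ yet preserves all edge lengths. The partition $A,B$ with $G[A,B]$ of maximum degree at most $1$ is exactly the structure that lets us do this: because almost no edges cross the cut, we can independently manipulate the two sides. The key operation is reflection of one part. Concretely, I would take $g$ to agree with $f$ on $A$, and on $B$ set $g(x) = -f(x) + c$ for a suitable constant $c$, i.e.\ reflect $B$ through a well-chosen point. Any edge inside $A$ is preserved since $g|_A = f|_A$; any edge inside $B$ is preserved because reflection is an isometry; so only the crossing edges need care.

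Since $G[A,B]$ has maximum degree at most $1$, its edges form a (possibly empty) matching. First I would handle the case where there is at least one crossing edge $uv$ with $u\in A$, $v\in B$. I would choose the reflection constant $c$ precisely so that the single length constraint $|g(u)-g(v)| = |f(u)-f(v)|$ is satisfied, that is, solve $|f(u) - (-f(v)+c)| = |f(u)-f(v)|$ for $c$; this is a linear equation in $c$ with a solution (in fact two solutions), and one of them yields the nontrivial reflection rather than the identity. The subtlety is that the matching may contain more than one crossing edge, in which case I cannot freely fix $c$ for each edge simultaneously. This is the step I expect to be the main obstacle, and I would resolve it by a more careful choice: rather than reflecting all of $B$ rigidly, note that the two sides interact only through the matching, so I would instead reflect a cleverly chosen subset, or argue that we may reduce to a single crossing edge by grouping the matched vertices.

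The cleaner route that avoids the multi-edge difficulty is to reflect a \emph{single} part of the cut that touches at most one matching edge. Since $G[A,B]$ is a matching, I would pick a connected component (under $G$) worth of vertices on one side that is incident to at most one crossing edge, or, more simply, observe that it suffices to find \emph{any} bipartition into two nonempty parts joined by at most one edge and reflect across that single constraint. If the cut is empty (no crossing edges at all), then $B$ can be reflected through any point and translated freely, giving infinitely many non-isometric length-preserving embeddings, so global rigidity fails outright. If the cut is a single edge, the construction above with the tailored constant $c$ works directly. The hypothesis $G\neq K_2$ ensures that at least one side has a pair of vertices whose relative distance is not pinned down by the reflected configuration, so that $g$ genuinely differs from $f$ up to isometry; I would verify this last point by exhibiting two vertices $x,y$ with $|g(x)-g(y)|\neq |f(x)-f(y)|$, which is where the constant $c$ and the reflection of only one part produce a detectable discrepancy.

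Finally, I would confirm injectivity of $g$: on $A$ it equals the injective $f$, on $B$ it is an affine image of the injective $f$ hence injective, and I would choose $c$ (using the freedom in the generic/continuous choice, discarding at most finitely many bad values) so that the images of $A$ and $B$ do not collide. This genericity argument is routine given that there are only finitely many forbidden coincidences. Putting these pieces together shows $(G,f)$ is not globally rigid for every injective $f$, and hence $G$ is not globally rigid in the sense of the paper.
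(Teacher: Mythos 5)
There are two genuine gaps here, and they compound each other. The first is a quantifier error: the lemma only requires exhibiting \emph{one} injective $f$ for which $(G,f)$ fails to be globally rigid (the paper's notion of a globally rigid graph quantifies over all injective embeddings), but you insist on producing a non-isometric partner for an \emph{arbitrary} $f$. That stronger statement is false under the lemma's hypotheses: take $G=P_3$ with edges $12,23$, partition $A=\{1,2\}$, $B=\{3\}$, and $f(1)=0$, $f(2)=1$, $f(3)=2$. Any injective $g$ preserving the two edge lengths is isometric to $f$, because the alternative position for vertex $3$ collides with vertex $1$. Your own construction exhibits exactly this failure: the crossing edge pins your reflection constant to $c\in\{2f(2),2f(3)\}=\{2,4\}$; the choice $c=4$ gives the identity and $c=2$ gives $g(3)=0=g(1)$, so injectivity fails, and there is no leftover freedom for your ``discard finitely many bad values of $c$'' step. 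The genericity must be invested in choosing $f$, which your plan has surrendered.

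The second gap is the multi-edge case, which you flag as the main obstacle but never actually close. A rigid reflection of $B$ (i.e.\ $g=-f+c$ on $B$, $g=f$ on $A$) can never satisfy two disjoint matching-edge constraints: the edge $a_ib_i$ forces $c\in\{2f(a_i),2f(b_i)\}$, and for two disjoint edges these two-element sets are disjoint because $f$ is injective --- and this holds for \emph{every} $f$, so no choice of embedding rescues the reflection idea. Moreover, neither of your proposed reductions exists in general: for $G=C_4$ with $A$, $B$ the two opposite edges (a matching cut of size two, satisfying the hypotheses), there is no bipartition of $V(G)$ into nonempty parts joined by at most one edge (since $C_4$ is $2$-edge-connected), and the unique component of $G[A]$ (and of $G[B]$) meets both crossing edges. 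The paper's proof sidesteps all of this by choosing $f$: it embeds $A$ injectively in $[0,1]$, places each matched $b\in B$ exactly at $f(a)+10$, puts the unmatched part of $B$ in $[10,11]\setminus(f(A)+10)$, and takes $g=f$ on $A$ and $g=f-20$ on $B$ --- a \emph{translation} of $B$, not a reflection. Because every matched pair was given the same length $10$, this single translation flips every matched vertex to the opposite side of its partner simultaneously; non-isometry is then witnessed by any non-adjacent cross pair $(a,b)$, whose distance changes by $2(10-|f(a)-f(b)|)\neq 0$, and such a pair exists precisely because the cut is a matching and $G\neq K_2$. Note finally that any witness to non-isometry must be a cross pair: in a construction of your type, distances inside $A$ and inside $B$ are automatically preserved, so your remark that ``at least one side has a pair of vertices whose relative distance is not pinned down'' is looking in the wrong place.
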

\begin{proof}
We define two non-isometric injective functions $f$ and $g: V(G) \rightarrow \mathbb{R}$ such that $|f(x)-f(y)|=|g(x)-g(y)|$ for every edge $xy\in E(G)$. 
   For $f$ we embed $A$ injectively in $[0,1]$ arbitrarily, and, for every $b \in B$ such that there exists $a \in A$ with $ab \in E(G)$, we embed $b$ at $f(a) +10$. 
    We embed the rest of $B$ injectively and arbitrarily inside $[10,11] \setminus (f(A)+10)$. 
    We now define the injective $g$ such that $g\arrowvert_{A}=f\arrowvert_{A}$ and for every $b \in B$, we set $g(b)=f(b)-20$. Note that, for an edge $xy\in E(G)$ inside $A$ or inside $B$ the property 
    $|f(x)-f(y)|=|g(x)-g(y)|$ is immediate. For and edge $ab\in E(G)$ with $b\in B$ and $a\in A$, we have that $g(b)=f(a)-10$, so $|g(a)-g(b)|=10=|f(a)-f(b)|$. Furthermore, for every $a \in A$ and $b \in B \setminus N(a)$, we have 
    \begin{align*}
        |g(a)-g(b)|-|f(a)-f(b)|=(f(a)-(f(b)-20))-(f(b)-f(a))=2(10-|f(a)-f(b)|)\neq 0,
    \end{align*}
     since $f(b)\not\in f(A)+10$. This shows that $f$ and $g$ are not isometric, since $G[A,B]$ is not a complete bipartite graph. \end{proof}

The next lemma gives us a structural property of sparse random graphs from which we deduce Theorem \ref{thm:LowerBoundGlobalRigidity}.
 The proof is based on a modified version of the classical study of the sizes of the connected components of sparse random graphs using comparisons to Galton-Watson processes.
We first recall the following Chernoff bounds (see, for instance, Theorem 2.1 in \cite{JLR}) as well as Azuma's inequality (see, for instance, \cite{wormald1999differential}, for the result and the definition of a submartingale).
\begin{lemma} \label{lem:chernoff}
Let $X$ be a binomial random variable with expected value $\mu$. Then, for any $\delta\in (0,1)$, we have 
$\mathrm{Pr}(X > (1+\delta)\mu) \leq e^{\frac{-\delta^2\mu}{2+\delta}}.$
\end{lemma}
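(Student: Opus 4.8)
The plan is to use the standard exponential-moment (Chernoff--Bernstein) method. Writing $X=\sum_{i=1}^{N}X_i$ as a sum of independent Bernoulli variables with $\mathbb{E}[X_i]=p$ and $\mu=Np$, I would fix a parameter $t>0$ and apply Markov's inequality to the nonnegative random variable $e^{tX}$, obtaining $\mathrm{Pr}(X>(1+\delta)\mu)\le e^{-t(1+\delta)\mu}\,\mathbb{E}[e^{tX}]$. By independence the moment generating function factorises, and using $1+x\le e^{x}$ on each factor gives $\mathbb{E}[e^{tX}]=\prod_{i=1}^{N}\bigl(1+p(e^{t}-1)\bigr)\le \exp\bigl(\mu(e^{t}-1)\bigr)$.

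Combining the two bounds yields $\mathrm{Pr}(X>(1+\delta)\mu)\le \exp\bigl(\mu(e^{t}-1)-t(1+\delta)\mu\bigr)$ for every $t>0$. I would then minimise the exponent over $t$; differentiating shows the optimum is at $e^{t}=1+\delta$, i.e.\ $t=\ln(1+\delta)$, which is positive precisely because $\delta>0$. Substituting this choice produces the classical Chernoff estimate
\[
\mathrm{Pr}(X>(1+\delta)\mu)\le \left(\frac{e^{\delta}}{(1+\delta)^{1+\delta}}\right)^{\mu}.
\]

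It then remains to compare this with the claimed bound, i.e.\ to show $\left(e^{\delta}/(1+\delta)^{1+\delta}\right)^{\mu}\le e^{-\delta^2\mu/(2+\delta)}$. Taking logarithms and dividing by $\mu$, this is equivalent to the single-variable inequality $(1+\delta)\ln(1+\delta)-\delta\ge \tfrac{\delta^2}{2+\delta}$ for $\delta\in(0,1)$. I would prove this by setting $\phi(\delta):=(1+\delta)\ln(1+\delta)-\delta-\tfrac{\delta^2}{2+\delta}$ and checking that $\phi(0)=0$, $\phi'(0)=0$, and $\phi''(\delta)=\tfrac{1}{1+\delta}-\tfrac{8}{(2+\delta)^3}\ge 0$; the last inequality reduces to $(2+\delta)^3\ge 8(1+\delta)$, which holds for all $\delta\ge 0$. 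Convexity together with $\phi(0)=\phi'(0)=0$ then forces $\phi\ge 0$ on $(0,1)$, completing the argument.

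The main (indeed only) obstacle is this final elementary calculus estimate: the exponential-moment steps are entirely routine, and the whole content of the lemma lies in checking that the sharp Chernoff bound $e^{\delta}/(1+\delta)^{1+\delta}$ is dominated by the cleaner exponential $e^{-\delta^2/(2+\delta)}$ on the relevant range of $\delta$. Since this is a well-known inequality (and the lemma is quoted from \cite{JLR}), in practice I would simply cite it, but the derivation above shows it is self-contained.
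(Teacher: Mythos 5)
Your proof is correct. Note, however, that the paper does not prove this lemma at all: it is quoted directly from Theorem 2.1 of \cite{JLR}, so there is no "paper proof" to compare against, and your self-contained derivation is the genuinely new content here. Your route is the standard one --- Markov applied to $e^{tX}$, factorising the moment generating function, optimising at $t=\ln(1+\delta)$ to get the sharp bound $\bigl(e^{\delta}/(1+\delta)^{1+\delta}\bigr)^{\mu}$, and then the elementary comparison $(1+\delta)\ln(1+\delta)-\delta\ge \delta^{2}/(2+\delta)$ --- and every step checks out: in particular your second derivative $\phi''(\delta)=\frac{1}{1+\delta}-\frac{8}{(2+\delta)^{3}}$ is computed correctly, and $(2+\delta)^{3}-8(1+\delta)=4\delta+6\delta^{2}+\delta^{3}\ge 0$ does hold for all $\delta\ge 0$, so convexity plus $\phi(0)=\phi'(0)=0$ closes the argument. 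Two minor remarks: first, the reference actually gives the slightly stronger denominator $2+2\delta/3$ (i.e.\ $\Pr(X\ge \mu+t)\le \exp(-t^{2}/(2(\mu+t/3)))$), so the paper's statement is a deliberate weakening, and your choice to target only the weaker $2+\delta$ is what makes the calculus verification so clean; second, your bound is established for $\Pr(X\ge(1+\delta)\mu)$, which dominates the strict-inequality event in the statement, so nothing is lost there. In practice citing \cite{JLR}, as both you and the paper do, is the expected move, but your derivation is a valid and complete substitute.
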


\begin{lemma}\label{lem:azuma}
Let $(X_i)_{i \geq 0}$ be a submartingale and let $c_i>0$ for each $i\geq 1$. If $|X_i -X_{i-1}| \leq c_i$ for each $i\geq 1$, then, for each $n\geq 1$,
\[
\mathrm{Pr}(X_n\leq X_0- \eps ) \leq  \exp \left( -\frac{\eps^2}{2\sum_{i=1}^n c_i^2} \right).
\]
\end{lemma}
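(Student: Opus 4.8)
The plan is to prove this by the exponential moment (Chernoff/Bernstein-type) method, where the submartingale hypothesis is exactly what makes the positive conditional drift work in our favour for the lower tail. Write $D_i := X_i - X_{i-1}$ and let $(\mathcal{F}_i)_{i \ge 0}$ be the underlying filtration, so that the submartingale property reads $\mathbb{E}[D_i \mid \mathcal{F}_{i-1}] \ge 0$ while the hypothesis gives $-c_i \le D_i \le c_i$. For any $\lambda > 0$, Markov's inequality applied to $e^{-\lambda(X_n - X_0)}$ yields
\[
\mathrm{Pr}(X_n \le X_0 - \eps) = \mathrm{Pr}\bigl(e^{-\lambda(X_n - X_0)} \ge e^{\lambda \eps}\bigr) \le e^{-\lambda \eps}\, \mathbb{E}\bigl[e^{-\lambda(X_n - X_0)}\bigr],
\]
so the whole task reduces to controlling the moment generating function of $-(X_n - X_0) = -\sum_{i=1}^n D_i$.

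First I would establish the key one-step estimate. Conditionally on $\mathcal{F}_{i-1}$, the variable $D_i$ lies in an interval of length $2c_i$ and has nonnegative conditional mean $\mu_i := \mathbb{E}[D_i \mid \mathcal{F}_{i-1}]$. By Hoeffding's lemma (the standard convexity bound on the moment generating function of a bounded random variable, obtained by dominating $e^{sx}$ by its chord on $[-c_i, c_i]$ and optimising the resulting expression), one gets
\[
\mathbb{E}\bigl[e^{-\lambda D_i} \,\big|\, \mathcal{F}_{i-1}\bigr] \le \exp\Bigl(-\lambda \mu_i + \tfrac{1}{2}\lambda^2 c_i^2\Bigr).
\]
This is where the submartingale structure enters: since $\lambda > 0$ and $\mu_i \ge 0$, the factor $e^{-\lambda \mu_i} \le 1$ may simply be dropped, leaving the clean bound $\mathbb{E}[e^{-\lambda D_i} \mid \mathcal{F}_{i-1}] \le e^{\lambda^2 c_i^2 / 2}$.

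Next I would iterate this estimate down the filtration. Conditioning on $\mathcal{F}_{n-1}$ and pulling out the $\mathcal{F}_{n-1}$-measurable factor $e^{-\lambda(X_{n-1} - X_0)}$,
\[
\mathbb{E}\bigl[e^{-\lambda(X_n - X_0)}\bigr] = \mathbb{E}\Bigl[e^{-\lambda(X_{n-1} - X_0)}\,\mathbb{E}\bigl[e^{-\lambda D_n} \mid \mathcal{F}_{n-1}\bigr]\Bigr] \le e^{\lambda^2 c_n^2/2}\,\mathbb{E}\bigl[e^{-\lambda(X_{n-1} - X_0)}\bigr],
\]
and induction on $n$ (the base case $n=0$ being trivial) gives $\mathbb{E}[e^{-\lambda(X_n - X_0)}] \le \exp\bigl(\tfrac{1}{2}\lambda^2 \sum_{i=1}^n c_i^2\bigr)$. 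Substituting back yields $\mathrm{Pr}(X_n \le X_0 - \eps) \le \exp\bigl(-\lambda \eps + \tfrac{1}{2}\lambda^2 \sum_{i=1}^n c_i^2\bigr)$, and optimising over the free parameter by taking $\lambda = \eps / \sum_{i=1}^n c_i^2$ produces exactly the claimed exponent $-\eps^2 / (2\sum_{i=1}^n c_i^2)$.

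The routine parts are the telescoping induction and the final optimisation over $\lambda$. The one genuinely substantive ingredient is Hoeffding's lemma, the conditional moment generating function bound for a bounded random variable; everything else is bookkeeping. The only point requiring real care is the sign discipline, namely using $\lambda > 0$ for the lower tail so that the nonnegative drift $\mu_i$ appears with a favourable sign and can be discarded. This is precisely the place where the submartingale, rather than martingale, hypothesis is used, and it is what I expect to be the main conceptual obstacle to get right.
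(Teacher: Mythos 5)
Your proposal is correct. The paper does not prove this lemma at all---it is quoted from Wormald's survey \cite{wormald1999differential}---and your argument (Markov's inequality applied to $e^{-\lambda(X_n-X_0)}$, the conditional Hoeffding lemma on each increment $D_i\in[-c_i,c_i]$, discarding the nonnegative drift term $e^{-\lambda\mu_i}\le 1$ thanks to $\lambda>0$, telescoping, and optimising $\lambda=\eps/\sum_{i=1}^n c_i^2$) is precisely the standard proof of the cited result, with the sign discipline for the lower tail handled correctly.
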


\begin{lemma}
\label{lem:GW2}
    Let $G \sim G(n,p)$ for $p \leq \frac{1.1}{n}$. Then, w.h.p., for each $v\in V(G)$ there is some $A_v\subset V(G)$ with $v\in A_v$ and $|A_v|\leq 6000\log n$ such that the maximum degree of $G[A_v,V(G)\setminus A_v]$ is at most $1$.
\end{lemma}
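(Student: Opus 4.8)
The plan is to build the set $A_v$ around each vertex $v$ by a pruned exploration of its component, designed so that the underlying branching process is \emph{subcritical} even though $\Gnp$ with $p=1.1/n$ is supercritical. Concretely, I would grow a set $S$, initialised to $\{v\}$, by repeatedly resolving ``bad'' vertices: if some $u\in S$ has at least two neighbours outside $S$, add all of $u$'s outside-neighbours to $S$; and if some $w\notin S$ has at least two neighbours inside $S$, add $w$ to $S$. The process halts once no bad vertex remains, at which point every vertex of $S$ has at most one neighbour outside $S$ and every vertex outside has at most one neighbour inside $S$ -- that is, the bipartite graph $G[S,V(G)\setminus S]$ has maximum degree at most $1$, exactly the required property. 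I then set $A_v:=S$, and the whole task reduces to showing that w.h.p.\ $|A_v|\le 6000\log n$ for every $v$ simultaneously.

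The key observation driving the size bound is the effect of pruning at non-branching vertices. Revealing edges as the exploration proceeds, when a newly added vertex $u$ is processed its number of fresh neighbours is stochastically dominated by $\xi\sim\mathrm{Bin}(n,1.1/n)$, which is essentially $\mathrm{Poisson}(1.1)$. Crucially, $u$ contributes offspring to the exploration only when it has at least two outside-neighbours (the first resolution rule); a vertex with at most one fresh neighbour is a ``thin spot'' at which that branch simply terminates. Hence the expected number of offspring is at most $\mathbb{E}[\xi\,\mathbbm{1}_{\xi\ge 2}]=\mathbb{E}[\xi]-\mathrm{Pr}(\xi=1)\approx 1.1-1.1e^{-1.1}\approx 0.73<1$. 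More generally, for mean $\lambda\le 1.1$ the pruned mean is at most $\lambda(1-e^{-\lambda})$, which stays below $1$ for all $\lambda<1.35$, so the exploration is dominated by a \emph{subcritical} Galton--Watson process for every $p\le 1.1/n$; this is precisely where the constant $1.1$ in the hypothesis is used. Since the total progeny of a subcritical branching process has an exponentially decaying tail, I would control the offspring counts with Lemma~\ref{lem:chernoff} and apply Lemma~\ref{lem:azuma} to the exploration walk $Y_t$ (the number of active vertices after $t$ steps, a process with negative drift once the rare large offspring are truncated) to show that $Y_t$ reaches $0$ within $O(\log n)$ steps. The generous constant $6000$ then makes the failure probability of the form $n^{-c}$ with $c>1$, so a union bound over the $n$ choices of $v$ is affordable.

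The remaining point is the second resolution rule, which fires exactly when the exploration closes a cycle by revealing an edge back into $S$, and I would argue this barely affects the analysis. Within a neighbourhood of size $s=O(\log n)$ the number of internal edges exceeds $s$ only with probability $n^{-\omega(1)}$ by Lemma~\ref{lem:chernoff}, so each exploration is tree-like up to a bounded number of excess edges, and each such edge triggers the second rule only a controlled number of times. Folding these rare events into the branching comparison keeps the dominating process subcritical and the size bound intact.

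I expect the main obstacle to be the bookkeeping that simultaneously (i) maintains the stochastic domination by a subcritical process as vertices are revealed -- in particular justifying that the depletion of neutral vertices is negligible, since $|S|=O(\log n)=o(n)$, and that the two resolution rules interact harmlessly -- and (ii) pushes the tail bound below $1/n$ uniformly, which forces a careful truncation of the unbounded offspring variable before Lemma~\ref{lem:azuma} can be applied. Once these estimates are in place, the matching-cut property of $A_v$ is immediate from the halting condition, and Lemma~\ref{lem:GW2} follows.
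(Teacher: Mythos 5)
Your proposal is correct and follows essentially the same route as the paper: a pruned exploration in which a processed vertex with at most one fresh neighbour spawns nothing (making the process subcritical for $p\le 1.1/n$, exactly the $\mathbb{E}[\xi\,\mathbbm{1}_{\xi\ge 2}]<1$ computation), a second rule absorbing outside vertices that acquire two neighbours inside $S$, and a Chernoff/Azuma argument with a union bound over the $n$ starting vertices. The bookkeeping obstacles you flag at the end are precisely what the paper's $X,Y,Z$ partition and its per-step drift claim (with probability at least $1/3$ a step reveals an edge but adds no vertex to $X$, and the submartingale increments are truncated at $1$ before applying Azuma) are designed to resolve.
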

\begin{proof}
Note that we can suppose for the rest of the proof that $G \sim G(n,p)$ for $p=1.1 / n$, as the property we are interested in is clearly non-decreasing in $p$. Let $v\in V(G)$. We will show that the property in the lemma holds for $v$ with probability $1-o(n^{-1})$, so that the result follows by a simple union bound.

It may be useful to recall the following classical process to study the component of the sparse random graph $G$ containing $v$. 
We begin by setting $X=\{v\}$, $X^-=\emptyset$ and $Z=V(G)\setminus \{v\}$. At each step, if possible, we choose an arbitrary vertex $w$ from $X\setminus X^-$, move its neighbors in $Z$ from $Z$ into $X$ and add $w$ to $X^-$. 
{It is well known, via comparison to an appropriate Galton-Watson process, that when the edge probability is $c/n$ for any constant $c>1$, then $X$ becomes linear-sized (in $n$) with some probability bounded away from 0.}
{For our purpose, we modify this process slightly to ensure that, even with an edge probability of $p=1.1/n$, the set $X$ remains small with probability $1-o(n^{-1})$. Our key change is the introduction of an (initially empty) set $Y$, such that if $w$ has exactly one neighbor not in $X$, which is not in $Y$, we instead move that neighbor from $Z$ to $Y$.}
This adjustment uses the fact that the set we eventually find, $A_v$, for the application of Lemma~\ref{lem:CutNotRecontructible}, can have a non-empty neighborhood in $G$, so we only definitely wish to add a vertex to $X$ if it has more than 1 neighbor in $X^-$ or if it has a neighbor in $X^-$ which has more than 1 neighbor outside of $X$. This is enough to drop the expected number of vertices we add to $X$ at each stage below 1, so that the process will quickly terminate. We now carefully give this process in full.

We define $A_v$ via a process, starting with $A_v=\{v\}$. 
In each step we add a (well-chosen) new vertex to $A_v$ and reveal the edges of $G$ incident to it. 
To aid us choosing the next vertex, we maintain sets $X, Y, Z, X^-$,    
such that throughout the whole process $X\cup Y \cup Z = V(G)$ is a partition of the vertex set, $X^- \subseteq X$, $\Delta (G[X^-,Y\cup Z]) \leq 1$, and moreover
 \begin{equation*} \label{eq:process}
|N(u) \cap X^-| = 
\begin{cases} 1 & \text{if $u\in Y$} \\ 0 & \text{if $u\in Z$.}
\end{cases}
\end{equation*}
In the set $X$ we will be collecting the vertices which eventually end up in $A_v$, and its subset $X^-$ contains those vertices whose neighbors were already revealed by our process.

We index the sets $X,X^-, Y, Z$ by the number $i$ of the current step and initialize by defining  $X_0=\{v\}$, $X_0^-=Y_0=\emptyset$ and $Z_0=V(G)\setminus \{v\}$. Let $\sigma = 6000 \log n$ be the number of rounds we target our process to likely end in. For $i \ge 0$, do the following: 
\begin{enumerate}
\item If $i \le \sigma$ and $|Z_i| {<} \frac{99 n}{100}$ then terminate with failure.
\item If $X_i = X_{i}^-$ then set $A_v = X_i$ and terminate with success.
\item Pick an arbitrary $v_i\in X_{i}\setminus X_{i}^-$ and move it to $X^-$, that is, we define $X_{i+1}^- =X_{i}^-\cup \{v_i\}$.
\item Reveal the set $W_i = N(v_i) \cap (Y_{i}\cup Z_{i})$ of neighbors of $v_i$ outside of $X_{i}$.
\begin{itemize}
\item If $W_i\cap Y_{i} = \emptyset$ and $|W_i \cap Z_{i}| \leq 1$ then we update our sets by $X_{i+1}=X_{i}$, $Y_{i+1}=Y_{i}\cup W_i$, and $Z_{i+1}=Z_{i}\setminus W_i$.
\item Otherwise, we update by $X_{i+1}=X_{i}\cup W_i$, $Y_i=Y_{i}\setminus W_i$, and $Z_{i+1}=Z_{i}\setminus W_i$.
\end{itemize}
\end{enumerate}
It is straightforward to check by induction that for these updates the conditions we promised are indeed maintained. In each round $X^-$ grows exactly by one element, hence for every $i\geq 0$ we have $i = |X^-_i| \leq |X_i| \le n$.
Therefore the number $t$ of rounds our procedure takes until termination is always at most $n$.

If the procedure terminates with success in at most $\sigma$ rounds then $A_v=X_t=X_t^-$ and  $|A_v| = |X_t^-|=t \le \sigma$. Furthermore, since $V(G) = X_t \cup  Y_t\cup Z_t$, we also have  $\Delta(G[A_v, V(G)\setminus A_v]) = \Delta(G[X_t^-, Y_t\cup Z_t]) \leq 1$, as desired. We will show that this happens with probability $1 - o(n^{-1})$ and use the union bound for $v\in V$ to conclude the proof of the lemma. 


If the procedure terminates with failure then $t=i$ for some $i\leq \sigma$ and $|Z_i|<\frac{99 n}{100}$. Then among the at most $in\leq \sigma n$ pairs we revealed in $G$ we will have found at least $\frac{n}{100}$ edges. The probability of this is at most
$$ \Pr\left(\mathrm{Bin}(\sigma n, p) \ge \frac{n}{100}\right) \leq {\sigma n\choose n/100} p^{n/100}= o(n^{-2}).$$ 
Thus the probability of terminating with failure  is $o(n^{-1})$.


It remains to show that the probability of terminating after more than $\sigma$ rounds is also $o(n^{-1})$. To that end for each $0\leq i<t$ we let $n_i=|X_{i+1}\setminus X_{i}|$, and for each $i\geq t$ let $n_i=0$.
Then $|X_0| + \sum_{i=0}^{t-1} n_i =|X_t|=t$. 
For each $i\geq 0$ and $j=1, \ldots, n$,  let $b_i^j$ be independent Bernoulli variables with success probability $p$. For each $i\geq 0$ we use $b_i^1, \ldots , b_i^{|Y_i|+|Z_i|}$ to generate $W_i$ and hence $Y_{i+1}, Z_{i+1}, X_{i+1}$ and $n_i$.
Furthermore, for each $i\geq 0$ we let $m_i = \sum_{j=1}^{n} b_i^j$, unless $i\geq t$ and we terminated with failure, in which case we let $m_i=1$.  \\
Clearly, for every $i\geq 0$ we always have $m_i\geq n_i$.
We now prove that the difference is non-zero with at least $1/3$ probability.
\begin{claim}
\label{cl:BoundProba_mi_ni}
    Let $i\geq 0$. Conditioned on all $b^j_l$ with $0\leq l\leq i-1$ and $1\leq j \leq n$,  the probability that $m_i-n_i \geq 1$ is at least $1/3$.
\end{claim}

\begin{proof}
If $t \leq i$ then, depending on whether we terminate  with failure or success, the probability that $m_i-n_i=m_i\geq 1$ is 1 or $1-(1-p)^n\geq 1-e^{-1.1}>1/3$, respectively. \\
If  $t > i$ then the probability that $m_i-n_i \geq 1$ is at least the probability that $b_i^j=1$ for no $i$ with $1\leq i\leq |Y_i|$ and exactly one $i$ with $|Y_i|<i\leq |Y_i|+|Z_i|$, which happens with probability $|Z_i|p(1-p)^{|Y_i|+|Z_i|-1}$. As we did not terminate with failure, we have $|Z_i| \geq \frac{99}{100}n$. So 
the probability in question is at least 
$\frac{99n}{100} \frac{1.1}{n}(1-p)^{n} \geq 1.089\cdot e^{-1.1 \times 1.01} > 1/3$, where the next to last inequality holds for large enough $n$. 
\end{proof}

From \Cref{cl:BoundProba_mi_ni} we deduce that the sequence of variables 
$$
    S_i = \sum_{j = 0}^i \min\{m_i - n_i - 1/3, 1\}
$$
forms a submartingale with $|S_i - S_{i-1}| \leq 1$. By Azuma's inequality (Lemma~\ref{lem:azuma}) applied with $\eps = \sigma / 12$, we conclude that with probability $1-o(n^{-1})$
\begin{align}
\label{eq:BoundDiffmim'i}
    \sum_{i=0}^{\sigma-1}(m_i-n_i) \geq \frac{1}{4}\sigma.
\end{align}
Applying Lemma~\ref{lem:chernoff} 
 with $\mu = 1.1 \sigma$ and $\delta= \frac{1}{44}$,  with probability $1-o(n^{-1})$ we have that
\begin{align}
\label{eq:Boundmi}
    \sum_{i=0}^{\sigma -1} \sum_{j=1}^n b_i^j \leq \frac{9}{8}\sigma.
\end{align}
We claim that if $t > \sigma$ then \eqref{eq:BoundDiffmim'i} or  \eqref{eq:Boundmi} does not hold, and hence the probability of this event is $o(n^{-1})$. 
Indeed, otherwise, by subtracting \eqref{eq:BoundDiffmim'i} from \eqref{eq:Boundmi} and using that $ \sum_{i=0}^{\sigma -1} m_i = \sum_{i=0}^{\sigma -1} \sum_{j=1}^n b_i^j$ 
(as we do not terminate with failure)  we obtain that $|X_{\sigma}|= 1+\sum_{i=0}^{\sigma -1} n_i \leq 1 + \frac{7}{8}\sigma$. 
However, this contradicts $\sigma \leq |X_\sigma| $, which follows since $\sigma < t$. 

We have thus shown that the probability of termination with success in at most $\sigma$ rounds is $1-o(n^{-1})$, as desired.
\end{proof}

We can now deduce \Cref{thm:LowerBoundGlobalRigidity}, as follows.
\begin{proof}[Proof of \Cref{thm:LowerBoundGlobalRigidity}]
    Let $G \sim G(n,p)$ for $p \leq \frac{1.1}{n}$. 
    By \Cref{lem:GW2}, w.h.p., for each $v\in V(G)$ there is some $A_v\subset V(G)$ with $v\in A_v$ and $|A_v|\leq 6000\log n$ such that $\Delta(G[A_v,V(G)\setminus A_v]) \leq 1$.
    Let $V' \subseteq V(G)$ be a subset of size $|V'| > 10^4 \log n$. Then, letting $v\in V'$ be arbitrary, we have that $\Delta( G[A_v\cap V',V'\setminus A_v]) \leq 1$, and, as $v\in A_v$, $|A_v|\leq 6000\log n$ and $|V'|>10^4\log n$, that $A_v\cap V'$ is non-empty and that $|V'\setminus A_v| \geq 2$. Therefore, by \Cref{lem:CutNotRecontructible}, $G[V']$ is not globally rigid.
    \end{proof}

\section{Open problems}\label{sec:open}

\paragraph{Random regular graphs.} Following Benjamini and Tzalik \cite{benjamini2022determining}, we studied the problem of global rigidity of random graphs in $\mathbb{R}$. A related natural question is, for which $d$ is a random $d$-regular graph globally rigid in $\mathbb{R}$ with high probability? The methods developed in this article, in particular Lemma~\ref{lemma:main}, together with known estimates on the second largest absolute eigenvalue of random $d$-regular graphs (e.g. see Friedman~\cite{friedman2003proof}) and the Expander Mixing Lemma, imply that a random $d$-regular graph is w.h.p.\ globally rigid in $\mathbb{R}$ for every $n \ge d \ge d_0$, where $d_0$ is a sufficiently large constant. The following argument shows $d_0 \ge 4$.

\begin{theorem}
\label{thm:RegGraphs}
    A $3$-regular random graph is w.h.p.\ not globally rigid in $\mathbb{R}$.
\end{theorem}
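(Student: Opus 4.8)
The plan is to reduce to \Cref{lem:CutNotRecontructible} and then exhibit, with high probability, a partition realising its hypothesis. For a $3$-regular graph, a partition $V(G) = A \sqcup B$ into non-empty parts satisfies $\Delta(G[A,B]) \le 1$ exactly when the crossing edges form a matching; equivalently, every vertex keeps at least two of its three neighbours on its own side. The natural source of such a set $A$ is a cycle: if $C$ is a cycle of length $\ell$, then each vertex of $A := V(C)$ already has its two cycle-neighbours inside $A$, and so sends at most one edge across the cut to $B := V(G)\setminus A$. The only extra requirement is that these crossing edges have pairwise distinct endpoints in $B$; call such a cycle \emph{clean}. Possible chords of $C$ only delete crossing edges and are harmless, so it suffices to find a single clean cycle with $3 \le \ell < n$, whereupon $A=V(C)$, $B=V(G)\setminus A$ are non-empty, $G \ne K_2$, and \Cref{lem:CutNotRecontructible} gives that $G$ is not globally rigid.

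The heart of the matter is thus to show that a random $3$-regular graph contains a clean cycle with high probability, and here lies the obstacle: no cycle of any \emph{fixed} length is present with high probability, since the number of $\ell$-cycles is asymptotically Poisson with the bounded mean $(d-1)^\ell/(2\ell) = 2^\ell/(2\ell)$. One must therefore work with a slowly growing length. I would fix $\ell = \ell(n) \to \infty$ with $\ell = o(\log n)$, say $\ell = \lceil \log\log n \rceil$, and carry out the computation in the configuration model, transferring the conclusion to the uniform random $3$-regular graph at the end using that a $3$-regular configuration is simple with probability bounded away from $0$, so that a property holding with high probability in the configuration model also holds with high probability in the uniform model.

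In the configuration model I would control the number $N$ of clean $\ell$-cycles by the first and second moment method. A standard count gives $\mathbb{E}[N] = (1+o(1))\,2^\ell/(2\ell)$: the potential cycles supply the factor $2^\ell/(2\ell)$, while the cleanliness conditions (the $\ell$ external stubs matching to distinct vertices outside $C$, and $C$ being chordless) hold with probability $1 - O(\ell^2/n) = 1 - o(1)$ for a uniformly chosen candidate. Since $\ell\to\infty$, $\mathbb{E}[N]\to\infty$. For concentration I would show $\mathbb{E}[N^2] = (1+o(1))\mathbb{E}[N]^2$, the dominant term coming from vertex-disjoint pairs and overlapping pairs being negligible because short cycles are sparse; Chebyshev then yields $N \ge 1$ with high probability. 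Equivalently, one may note that with high probability every cycle of length $O(\log\log n)$ is isolated (pairwise vertex-disjoint and inducing just a cycle), which makes any such cycle automatically clean, and combine this with the existence of one short cycle via the Poisson approximation for cycle counts, for which $\Pr[N=0]\to 0$ is immediate once the mean diverges.

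The step demanding the most care, and the genuine obstacle, is this second-moment (equivalently, isolation) estimate in the slowly-growing-length regime: one must verify that correlations between nearby or overlapping candidate cycles do not inflate $\mathbb{E}[N^2]$ beyond $(1+o(1))\mathbb{E}[N]^2$. This is where the bookkeeping in the configuration model is used, and where the bound $\ell = o(\log n)$ enters to keep the relevant error terms, of the forms $O(\ell^2/n)$ and $O(4^\ell/n)$, tending to $0$ (note $4^\ell = n^{o(1)}$). Everything else — the reduction to \Cref{lem:CutNotRecontructible}, the harmlessness of chords, and the configuration-to-uniform transfer — is routine.
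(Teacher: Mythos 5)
Your reduction to \Cref{lem:CutNotRecontructible} via a cycle whose crossing edges form a matching with pairwise distinct endpoints outside the cycle is exactly the reduction the paper uses, but your probabilistic core is genuinely different. The paper takes a \emph{shortest} cycle $C$ --- which exists deterministically because $G$ is $3$-regular, and has length $O(\log n) < n$ by the Moore bound --- and observes that a vertex $b \notin C$ with two neighbours on $C$ would create either a strictly shorter cycle, or a copy of one of two fixed graphs: $F_1$ (two triangles sharing an edge) or $F_2$ (a $4$-cycle plus a vertex joined to two opposite vertices). Both are w.h.p.\ absent by a standard first-moment bound (Lemma 2.7 in Wormald's survey), so the shortest cycle is automatically ``clean'' in your terminology. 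This completely sidesteps the obstacle you correctly identify --- that no cycle of any \emph{fixed} length is present w.h.p.\ --- because minimality, not typicality, supplies the cycle. Your route instead takes $\ell = \lceil \log\log n\rceil$ and proves existence of a clean $\ell$-cycle by first and second moments in the configuration model; this is sound ($\mathbb{E}[N]\to\infty$, and the relevant error terms are controlled since $4^{\ell} = n^{o(1)}$), and your alternative finish --- that w.h.p.\ all cycles of length $O(\log\log n)$ are pairwise vertex-disjoint and chordless, hence any one of them is clean --- is a tidy way to organise it. What the paper's approach buys is brevity and elementarity: two union bounds over fixed subgraphs, no growing-length regime, no second-moment bookkeeping (which in your write-up is deferred to standard but nontrivial configuration-model estimates rather than carried out). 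What your approach buys is extra information and flexibility: it shows that many vertex-disjoint clean cycles exist, and the moment computation adapts readily to other degree sequences, whereas the shortest-cycle trick is specific to the ``one cycle suffices'' situation.
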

\begin{proof}
    Let $G$ be a random $3$-regular graph, and let $C$ be a shortest cycle of $G$. Let $F_1$ be the graph which consists of $2$ triangles sharing one edge exactly, and $F_2$ be the graph which consists of a cycle $u_1u_2u_3u_4$ and a vertex $x$ along with the edges $xu_1$ and $xu_3$. 
    It is known that w.h.p.\ $G$ does not contain any subgraph isomorphic to $F_1$ nor $F_2$ (see for instance Lemma 2.7 in the survey by Wormald \cite{wormald1999models}). If a vertex $b \in V(G) \setminus C$ would have two neighbors in $C$, then we would either obtain a shorter cycle or a copy of $F_1$ or $F_2$ -- neither of which can happen. Therefore, applying \Cref{lem:CutNotRecontructible} to $A=C$ and $B=V(G) \setminus C$ finishes the proof.
\end{proof}
\noindent {\bf Remark.} Pawe\l{} Rza\.{z}ewski~\cite{Rzazewski} pointed out that an elaboration on the above argument shows that {\em every} $3$-regular graph on at least eight vertices is not globally rigid. 

\Cref{thm:RegGraphs} leaves open the problem of determining the smallest $d \ge 4$ for which a random $d$-regular graph with $n$ vertices is globally rigid in $\mathbb{R}$ w.h.p., where we make the following conjecture.

\begin{conjecture}
    A random $4$-regular graph with $n$ vertices is globally rigid in $\mathbb{R}$ w.h.p.
\end{conjecture}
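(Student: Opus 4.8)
The plan is to abandon conditions \ref{prop:edge} and \ref{prop:2nbr}, which a $4$-regular graph is too sparse to satisfy, and instead argue directly through an edge two-colouring that encodes the behaviour of a competing embedding. Suppose $f,g\colon V(G)\to\mathbb{R}$ are injective with $|f(x)-f(y)|=|g(x)-g(y)|$ on every edge. Writing $\phi=g-f$ and $\psi=g+f$, the edge equation factors as $(\phi(x)-\phi(y))(\psi(x)-\psi(y))=0$, so by injectivity of $f$ every edge $xy$ is of exactly one type: a \emph{red} edge with $\phi(x)=\phi(y)$, or a \emph{blue} edge with $\psi(x)=\psi(y)$. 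Then $\phi$ is constant on each component of the red subgraph and $\psi$ on each component of the blue subgraph, and $g$ is an isometry of $f$ precisely when $\phi$ is globally constant (giving $g=f+c$) or $\psi$ is globally constant (giving $g=-f+c$). Hence $G$ is globally rigid if and only if no red/blue edge colouring admits, for some injective $f$, a consistent assignment in which both $\phi$ and $\psi$ are non-constant. This one framework recovers the positive mechanism of Lemma~\ref{lemma:main} (every edge between the left- and right-halves is forced red, which is exactly why agreement spreads) and the obstruction of Lemma~\ref{lem:CutNotRecontructible} (a matching cut lets one colour all crossing edges blue and everything else red, consistently because each crossing vertex meets only one blue edge).

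First I would dispose of the simplest obstruction. By Lemma~\ref{lem:CutNotRecontructible}, a random $4$-regular graph can be globally rigid only if it has no partition $V=A\sqcup B$ with $\Delta(G[A,B])\le 1$; since the graph is $4$-regular this is the same as having no set $A$ with $|A|\le n/2$ inducing a subgraph of minimum degree at least $3$. I would bound the expected number of such sets in the configuration model. For $|A|=o(n)$ the per-vertex constraint (at least three of each vertex's four half-edges pair inside $A$) gives a first-moment estimate that decays; for $|A|=\alpha n$ with $\alpha$ bounded away from $0$ one instead needs a large-deviation bound showing that the crossing-edge count $e(A,B)$, whose mean is about $4\alpha(1-\alpha)n$, is exponentially unlikely to fall to its matching-cut ceiling $\min(|A|,|B|)=\alpha n$. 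The content is that this large-deviation rate must beat the entropy $H(\alpha)n$ of choosing $A$ uniformly over $\alpha\in(0,1/2]$; establishing this across all scales simultaneously is the first technical hurdle, though it is a self-contained computation of a kind known to succeed for random regular graphs.

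The substantial difficulty is the positive direction: excluding every \emph{other} colouring. The natural strategy mirrors Lemma~\ref{lemma:main}. Given a competing $g$, pass to $L=L_f\cap L_g$ and $R=R_f\cap R_g$, each of size at least $n/5$; as in that proof every $L$--$R$ edge is red, so $\phi$ is constant on each component of $G[L,R]$. One would like a single component to carry almost all of $L\cup R$, after which the colouring constraints together with the absence of a matching cut should force $\phi$ globally constant. The obstacle is precisely that the expansion of a $4$-regular graph is too weak to guarantee such a giant component for adversarially chosen $L,R$: since Friedman's bound gives second eigenvalue $\lambda\le 2\sqrt{3}$, the Expander Mixing Lemma only forces an edge between two sets once their relative size exceeds $\lambda/d\approx 0.87$, far above the $n/15$ threshold, which is exactly why \ref{prop:edge} fails here. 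One must instead show that any non-trivial colouring forces, through the constraints $\phi(x)-\phi(y)=2(f(y)-f(x))$ carried by blue edges, two vertices to receive equal $f$-values, contradicting injectivity of $f$.

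This last step is where I expect the real work, and the main obstacle, to lie. Contracting each red component to a node, the blue edges form a quotient graph, and a non-isometric solution exists iff this quotient (with at least two nodes) admits an injective $f$ consistent around all its cycles; a matching cut is only the two-node case, and there are genuinely more complex multi-component foldings to exclude. Because Garamv\"olgyi~\cite{garamvolgyi2022global} showed that deciding injective global rigidity in $\mathbb{R}$ is co-NP-complete, there is no hope of a clean deterministic characterisation to verify, so the argument must exploit randomness to control the exponentially many candidate colourings at once. Concretely, I would run a weighted union bound in which each colouring contributes the probability that a random $4$-regular graph both realises it as a consistent $\phi$-system \emph{and} keeps $f$ injective, balancing the number of independent blue cycles a colouring creates (each over-determining $f$) against the entropy of the colouring itself. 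Pushing this trade-off all the way down to $d=4$, rather than the large constant $d_0$ afforded by the crude spectral argument, is the crux of the conjecture and is what prevents the present methods from settling it.
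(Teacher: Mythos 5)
This statement is a \emph{conjecture} in the paper: the authors do not prove it (they only prove the complementary fact, Theorem~\ref{thm:RegGraphs}, that $d=3$ fails, and remark that their Lemma~\ref{lemma:main} plus spectral expansion handles only $d\ge d_0$ for a large constant $d_0$). Your proposal does not prove it either, and to your credit it says so. The reformulation you set up is correct and genuinely useful: writing $\phi=g-f$, $\psi=g+f$, the edge condition factors as $(\phi(x)-\phi(y))(\psi(x)-\psi(y))=0$, injectivity of $f$ makes the red/blue type of each edge well defined, and global rigidity is equivalent to the nonexistence of an injective $f$ carrying a consistent colouring with both $\phi$ and $\psi$ non-constant. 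This cleanly unifies the mechanism inside Lemma~\ref{lemma:main} (cross edges between the two halves are forced red) with the obstruction of Lemma~\ref{lem:CutNotRecontructible} (a matching cut is a two-class blue/red colouring). Your diagnosis of why the paper's method stalls at $d=4$ is also accurate: Friedman's bound gives $\lambda/d\approx 2\sqrt3/4\approx 0.87$, far above the $1/15$ expansion scale that properties \ref{prop:edge} and \ref{prop:2nbr} require.

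The gaps, however, are real and sit exactly where a proof would have to live. First, the matching-cut exclusion is only sketched, and your reduction is lossy: for a $4$-regular graph, $\Delta(G[A,B])\le 1$ requires \emph{both} $\delta(G[A])\ge 3$ and $\delta(G[B])\ge 3$, so ruling out every set $A$ with $|A|\le n/2$ and $\delta(G[A])\ge 3$ is sufficient but strictly stronger than ruling out matching cuts --- and a back-of-envelope first moment in the configuration model suggests it is \emph{too} strong: at $|A|=n/2$ the entropy is $2^{n(1+o(1))}$ while the heuristic per-vertex cost of internal degree at least $3$ is $\Pr(\mathrm{Bin}(4,1/2)\ge 3)=5/16$, giving $2^n(5/16)^{n/2}=(5/4)^{n/2}\to\infty$. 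So you would have to analyse the joint two-sided matching-cut event directly, which you never formulate, let alone bound. Second, and decisively, the positive direction --- showing that no colouring other than the trivial ones (and, were they to exist, matching cuts) can be realised consistently by an injective $f$ --- is left entirely open. The proposed ``weighted union bound'' trading the number of independent blue cycles (each over-determining $f$) against the entropy of colourings is a plan, not an argument: no bound on either quantity is established, no mechanism is given for handling colourings whose red subgraph has many components of intermediate size, and you yourself identify this step as ``the crux of the conjecture.'' As it stands, the proposal is a thoughtful research programme consistent with the paper's own assessment that the problem is open, but it contains no proof of the conjecture.
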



\paragraph{Phase transition.}
The common theme of Theorems \ref{thm:main} and \ref{thm:sparse} is that while on general graphs injective global rigidity in $\mathbb{R}$ could be significantly different from generic global rigidity (i.e. $2$-connectivity), on random graphs they behave the same. According to Theorem~\ref{thm:LowerBoundGlobalRigidity} however the two exhibit different behaviour in terms of their phase transition. On the one hand we know  that the emergence of a linear sized 2-connected (hence generically globally rigid in $\mathbb{R}$) component happens in $G(n,p)$ around edge probability $\frac{1}{n}$, on the other hand our theorem shows that a linear-sized injectively globally rigid component appears only after  $\frac{1.1}{n}$. Since Theorem~\ref{thm:sparse} implies that some constant in place of $1.1$ would be sufficient to guarantee such phase transition, it is natural to wonder how large this must be. 

\begin{problem} What is the infimum of those constants $C$ for which there exists $\delta=\delta(C)>0$ such that in the random graph $G\sim G(n,C/n)$ w.h.p. there exists a subset $V' \subseteq [n]$ of size $|V'| \geq \delta n$ for which the induced subgraph $G[V']$ is globally rigid in $\mathbb{R}$. 
\end{problem}

\paragraph{Algorithmic problem.} We note that in the case of a fixed {injective} function $f$, Benjamini and Tzalik \cite{benjamini2022determining}, as well as Gir\~{a}o, Illingworth, Michel, Powierski, and Scott~\cite{girao2023reconstructing}, also considered the algorithmic problem of 
finding an injective function $f':V(G) \rightarrow \mathbb{R}$ with $|{f'(x)-f'(y)}|= | f(x)-f(y)|$ for every edge $xy\in E(G)$ when $G$ is a random graph. 
They obtain algorithms with polynomial expected running time. In our setup, where we generate only one random graph to reconstruct any {injective} function $f$, our proof does not provide any insight on how to find such $f'$. Note that there is a trivial algorithm with running time $O(|E(G)|2^n)$: take a BFS (or DFS) ordering of the vertices in $G$ and then embed each next vertex both possible ways (either $+$ or $-$ the edge length to its parent vertex), and at the end check whether all pairwise distances are correct. We wonder whether this could be improved. 


\begin{problem}
    Find an algorithm $\mathcal{A}$ with the following property: Let $G \sim G(n,p)$ for $p \gg \log n / n$. Then w.h.p.\ $G$ is such that, for any injective $f \colon V(G) \rightarrow \mathbb{R}$, $\mathcal{A}(G, f)$ finds in polynomial time (depending only on $n$) {an injective} function $f'$ satisfying $|f'(x)-f'(y)| = |f(x)-f(y)|$ for every edge $xy\in E(G)$.
\end{problem}

\paragraph{Higher dimensions.} Finally, while one cannot hope for an extension of Theorem \ref{thm:main} to $\mathbb{R}^d$ for $d \ge 2$, it is conceivable that a statement of Theorem \ref{thm:sparse} is true for any $d \ge 2$. Even showing this for a given $f \colon [n] \rightarrow \mathbb{R}$ is an open problem, already suggested in \cite{girao2023reconstructing}, with some recent progress by Barnes, Petr, Portier, Randall Shaw, and Sergeev \cite{barnes2024reconstructing}. Here we state the global rigidity version.

\begin{problem}
    Show that, for every integer $d\geq 2$ and $\eps > 0$, there exists $C > 0$ such that in the random graph $G \sim G(n,C/n)$  w.h.p. there exists a subset $V' \subseteq [n]$ of size $|V'| \ge (1 - \varepsilon)n$ for which the induced subgraph $G[V']$ is globally rigid in $\mathbb{R}^d$.
\end{problem}

\bibliographystyle{abbrv}
\bibliography{bibliography.bib}

\end{document}